\def\XXint#1#2#3{{\setbox0=\hbox{$#1{#2#3}{\int}$ }
\vcenter{\hbox{$#2#3$ }}\kern-.58\wd0}}
\def\XXsum#1#2#3{{\setbox0=\hbox{$#1{#2#3}{\sum}$ }
\vcenter{\hbox{$#2#3$ }}\kern-.51\wd0}}
\begin{document}

\newcommand\cutoffint{\mathop{-\hskip -4mm\int}\limits}
\newcommand\cutoffsum{\mathop{-\hskip -4mm\sum}\limits}
\newcommand\cutoffzeta{-\hskip -1.7mm\zeta} 
\newcommand{\goth}[1]{\ensuremath{\mathfrak{#1}}}
\newcommand{\bbox}{\normalsize {}%
        \nolinebreak \hfill $\blacksquare$ \medbreak \par}
\newcommand{\simall}[2]{\underset{#1\rightarrow#2}{\sim}}

\newtheorem{theorem}{Theorem}[section]
\newtheorem{lem}[theorem]{Lemma}
\newtheorem{coro}[theorem]{Corollary}
\newtheorem{problem}[theorem]{Problem}
\newtheorem{conjecture}[theorem]{Conjecture}
\newtheorem{prop}[theorem]{Proposition}
\newtheorem{propdefn}[theorem]{Proposition-Definition}
\newtheorem{lemdefn}[theorem]{Lemma-Definition}
\theoremstyle{definition}
\newtheorem{defn}[theorem]{Definition}
\newtheorem{remark}[theorem]{Remark}
\newtheorem{exam}[theorem]{Example}
\newtheorem{coex}[theorem]{Counterexample}
\newtheorem{algorithm}[theorem]{Algorithm}
\newtheorem{convention}[theorem]{Convention}
\newtheorem{principle}[theorem]{Principle}

\renewcommand{\theenumi}{{\it\roman{enumi}}}
\renewcommand{\theenumii}{{\alph{enumii}}}

\newenvironment{thmenumerate}
{\leavevmode\begin{enumerate}[leftmargin=1.5em]}{\end{enumerate}}

\newcommand{\nc}{\newcommand}
\newcommand{\delete}[1]{}
\newcommand{\aside}[1]{\delete{#1}}

\nc{\mlabel}[1]{\label{#1}}  
\nc{\mcite}[1]{\cite{#1}}  
\nc{\mref}[1]{\ref{#1}}  
\nc{\mbibitem}[1]{\bibitem{#1}} 

\delete{
\nc{\mlabel}[1]{\label{#1}  
{\hfill \hspace{1cm}{\small\tt{{\ }\hfill(#1)}}}}
\nc{\mcite}[1]{\cite{#1}{\small{\tt{{\ }(#1)}}}}  
\nc{\mref}[1]{\ref{#1}{{\tt{{\ }(#1)}}}}  
\nc{\mbibitem}[1]{\bibitem[\bf #1]{#1}} 
}


\newcommand{\bottop}{\top\hspace{-0.8em}\bot}
\nc{\mtop}{\top\hspace{-1mm}}
\nc{\tforall}{\text{ for all }}
\nc{\bfcf}{{\calc}}
\newcommand{\R}{\mathbb{R}}
\newcommand{\FC}{\mathbb{C}}
\newcommand{\K}{\mathbb{K}}
\newcommand{\Z}{\mathbb{Z}}
\nc{\PP}{\mathbb{P}}
\newcommand{\Q}{\mathbb{Q}}
\newcommand{\N}{\mathbb{N}}
\newcommand{\mo}{\mathbb{O}}
\newcommand{\F}{\mathbb{F}}
\newcommand{\T}{\mathbb{T}}
\newcommand{\G}{\mathbb{G}}
\newcommand{\C}{\mathbb{C}}

\newcommand{\bfc}{\calc}

\newcommand {\frakb}{{\mathfrak {b}}}
\newcommand {\frakc}{{\mathfrak {c}}}
\newcommand {\frakd}{{\mathfrak {d}}}
\newcommand {\fraku}{{\mathfrak {u}}}
\newcommand {\fraks}{{\mathfrak {s}}}
\newcommand {\frakP}{{\mathfrak {P}}}

\newcommand {\cala}{{\mathcal {A}}}
\newcommand {\calb}{\mathcal {B}}
\newcommand {\calc}{{\mathcal {C}}}
\newcommand {\cald}{{\mathcal {D}}}
\newcommand {\cale}{{\mathcal {E}}}
\newcommand {\calf}{{\mathcal {F}}}
\newcommand {\calg}{{\mathcal {G}}}
\newcommand {\calh}{\mathcal{H}}
\newcommand {\cali}{\mathcal{I}}
\newcommand {\call}{{\mathcal {L}}}
\newcommand {\calm}{{\mathcal {M}}}
\newcommand {\calp}{{\mathcal {P}}}
\newcommand {\calv}{{\mathcal {V}}}
\newcommand {\calq}{{\mathcal {Q}}}

\newcommand{\conefamilyc}{{\mathfrak{C}}}
\newcommand{\conefamilyd}{{\mathfrak{D}}}

\newcommand{\Hol}{\text{Hol}}
\newcommand{\Mer}{\text{Mer}}
\newcommand{\lin}{\text{lin}}
\nc{\Id}{\mathrm{Id}}
\nc{\ot}{\otimes}
\nc{\bt}{\boxtimes}
\nc{\id}{\mathrm{Id}}
\nc{\Hom}{\mathrm{Hom}}
\nc{\im}{{\mathfrak Im}}

\newcommand{\tddeux}[2]{\begin{picture}(12,5)(0,-1)
\put(3,0){\circle*{2}}
\put(3,0){\line(0,1){5}}
\put(3,5){\circle*{2}}
\put(3,-2){\tiny #1}
\put(3,4){\tiny #2}
\end{picture}}

\newcommand{\tdtroisun}[3]{\begin{picture}(20,12)(-5,-1)
\put(3,0){\circle*{2}}
\put(-0.65,0){$\vee$}
\put(6,7){\circle*{2}}
\put(0,7){\circle*{2}}
\put(5,-2){\tiny #1}
\put(6,5){\tiny #2}
\put(-5,8){\tiny #3}
\end{picture}}

%
%

\nc{\mge}{_{bu}\!\!\!\!{}}

\nc{\vep}{\varepsilon}

\def \e {{\epsilon}}
\nc{\syd}[1]{}

\nc{\prt}{P-}
\nc{\Prt}{P-}
\newcommand{\loc}{locality\xspace}
\newcommand{\Loc}{Locality\xspace}
\nc{\orth}{orthogonal\xspace}
\nc{\tloc}{L-}
\newcommand{\lset}{{\bf \loc SET }}
\newcommand{\set}{{\bf SET }}
\newcommand{\xat}{{X^{_\top 2}}}
\newcommand{\xbt}{{X^{_\top 3}}}
\newcommand{\xct}{{X^{_\top 4}}}

\newcommand{\gat}{{G^{_\top 2}}}
\newcommand{\gbt}{{G^{_\top 3}}}
\newcommand{\gct}{{G^{_\top 4}}}
\nc{\gnt}{{G^{_\top n}}}

\newcommand{\htwot}{{\calh ^{\ot_\top 2}}}
\newcommand{\hbt}{{calh ^{\ot_\top 3}}}
\newcommand{\hct}{{\calh ^{\ot_\top 4}}}

 \nc{\bfk}{{\bf k}}
\nc{\ID}{\mathfrak{I}} \nc{\lbar}[1]{\overline{#1}}
\nc{\bre}{{\rm b}} \nc{\sd}{\cals} \nc{\rb}{\rm RB}
\nc{\A}{\rm angularly decorated\xspace} \nc{\LL}{\rm L}
\nc{\w}{\rm wid} \nc{\arro}[1]{#1}
\nc{\ver}{\rm ver}
\nc{\FL}{F_{\mathrm L}}
\nc{\FNA}{\FN(A)} \nc{\NA}{N_{A}}
\nc{\dr}{\diamond_r}
\nc{\shar}{{\mbox{\cyrs X}}_r} 
\nc{\dt}{\Delta_T}
\nc{\da}{\Delta_A}
\nc{\vt}{\vep_T }
\nc{\bul}{\bullet}
\nc{\free}[1]{\bar{#1}}
\nc{\lt}{{}^\top\!U}
\nc{\rt}{U^\top}
\nc{\lts}{{}^\top\!}
\nc{\weak}{strong\xspace}
\nc{\strong}{strong\xspace}
\nc{\fine}{refined\xspace}


\title[Several locality semigroups, path semigroups and partial semigroups]{Several locality semigroups, path semigroups and partial semigroups}

\author{Shanghua Zheng}
\address{Department of Mathematics, Jiangxi Normal University, Nanchang, Jiangxi 330022, China}
         \email{zhengsh@jxnu.edu.cn}

\date{\today}

\begin{abstract}
Locality semigroups were proposed recently as  one of the basic locality algebraic structures, which are studied in mathematics and physics.
Path semigroups and partial semigroups were also developed by many authors in the literature.
In this paper, we study  free objects in the category of refined locality semigroups. It turns out that the path locality semigroup of a quiver is the free refined locality semigroup on a locality set. We also explore the relationships among locality semigroups, partial semigroups and path locality semigroups, concluding that the path locality semigroup is a proper subclass of the intersection of locality semigroups and partial semigroups. In particular, the class of refined locality semigroups is a proper subclass of strong locality semigroups. Furthermore, we show that, when a partial semigroup is a refined locality semigroup, one can extend it a strong semigroup with zero.
\end{abstract}

\subjclass[2010]{20M05, 18B40, 16G20, 05C38, 08A55}

\keywords{locality semigroup, path semigroup, quiver, partial semigroup, partial algebra, refined locality semigroup}

\maketitle

\tableofcontents

\setcounter{section}{0}

\allowdisplaybreaks

\section{Introduction}
The concept of locality is widely  employed in various branches of mathematics, such as local algebras~\mcite{JS} and local operators in functional analysis~\mcite{BAE,Bat}.   Locality is also used in  computer science and physics, especially in classic and quantum field theory.  For instance, the locality principle is a critical factor in  Einstein's theory.  It is well-known that renormalization, which is a technique to remove the divergences in  Feynman integrals calculations,  plays an important role in quantum field theory~\mcite{Bor,BP,CK,Z} and mathematics~\mcite{GZ,GPZ3,M}.
More recently,  from an algebraic viewpoint, the study about how to preserve locality in the renormalization under the framework of algebraic Birkhoff factorization was proposed by  P. Clavier, L. Guo,  S. Paycha and B. Zhang~\mcite{CGPZ1}. As a result, locality semigroup, locality algebras, locality coalgebras, and locality Rota-Baxters are  established in the context of locality. As a starting point of this paper, we develop the basic results of locality semigroups. One of these results  is  free objects in the category of locality semigroups.

As is well known, a quiver $Q:=(Q_0,Q_1,s,t)$ is  a directed graph and can be viewed as a  basic mathematical object. The theory of representations of quivers  was originally introduced to solve the classification problem of tuples of subspaces of a prescribed vector space  from  linear algebra. Since then quiver representations have been studied quite extensively with board application in mathematics, including  invariant theory, Kac-Moody Lie algebras and quantum groups~\mcite{Br,FH,HJ,Scho}.

Denote $\calp$ by the set of all paths in $Q$. In fact, the definition of multiplication of  path in $Q$ is involved in  partially defined binary  operations, since the composition $pq$ makes sense only if $t(p)=s(q)$ for $p,q\in\calp$.  This means that the multiplication of path is well-defined  only for some elements of $\calp$.  Roughly speaking, the  path  is  analogous to the locality semigroups and possesses some ``freeness" property, and hence is  called a {\bf path locality semigroup} in the sequel. Motivated by the freeness property of path, we explore the basic results of path locality semigroups.   We finally show that the path locality semigroup of a quiver is the free refined locality semigroup on a locality set.  On the other hand, a {\bf path  semigroup} is a path locality semigroup $\calp$ by  adding a zero element (or zero path) and then defining $pq=0$ when $t(p)\neq s(q)$ for $p,q\in\calp$.   In~\mcite{For,MB}, the authors study  formulas for determining effective dimensions of  path semigroups over an uncountable field.

Furthermore, the path algebra $\bfk \calp$ of a quiver $Q$ is essential in the theory of quiver representations as well. For example, a representation of $Q$ is  equivalent to a left module over the path algebra $\bfk \calp$~\mcite{Br}.  Also, according to the well-known Gabriel Theorem~\mcite{ASS,ARS} an algebra over an algebraically closed field is a quotient of the path algebra of its Ext-quiver modulo an admissible ideal.  More generally,  an Artinian algebra over a perfect field is isomorphic to a quotient of the generalized path algebra of its natural quiver~\mcite{LL}. In a recent study ~\mcite{GL} of path algebras,  the authors developed the Lie algebra  of derivations on the path algebra  of a quiver, and gave the characterizations of derivations on a path algebra. More recently, the Hopf algebras on path algebras,  reconstruction of path algebras and the dimensions of path algebras were also studied ~\mcite{AH,HU,KW}. As a consequence, the relationships between path algebras and path locality algebras will be continued in a future work.

A partial semigroup can be regarded as a generalization of a semigroup $(S,\cdot)$ to partial binary operations, that is,  operations are defined only for some elements of $S$.  In~\mcite{Sch}, partial semigroups were also used to develop the coordinatization of all bounded posets.  There are close relationships between the partial semigroups and locality semigroups, because of  the  property that their multiplications are partially defined similarly. Then a natural question comes up: whether or not one of them includes the other. For this reason this paper  discusses  the relationships between locality semigroups and partial semigroups. Meanwhile, we also develop the relationships among \fine locality semigroups, \weak locality semigroups and partial semigroups.

In fact, partial algebras are introduced in order to solve the word problem, which is  whether two words in the generators represent the same element of the algebra ~\mcite{Ev51}. Nowadays, partial algebras are especially useful for theoretical computer science~\mcite{Bur}.
See~\mcite{BBH,Bur, Gr, Gud, LE,SW} for  further details.

In addition, in~\mcite{GS}, the authors give a general way  to construct algebras with given properties,  starting with a simple constructed partial algebra and completing it by using some universal constructions. Thus, the last main goal of this paper is to provide a method to construct a semigroup from any given partial semigroup. We discuss this question in the last section.

The layout of the paper is as follows. In Section~\mref{sec:FreefineLSG} we start by recalling the concept of locality semigroups and give some examples.
We then  give the  construction of free \fine locality semigroups.  In Section~\mref{sec:parSG} we first give the relations among refined locality semigroups, strong locality semigroup and  the intersection between partial semigroups and locality semigroups. We further discuss  in some detail the relationship between partial semigroups and locality semigroups.
Section~\mref{sec:stro} then gives a natural way to construct a strong semigroup with zero from a \fine locality semigroup, and some necessary examples are also provided to show why this method  dose not apply to more general partial semigroups. Finally, we  show that the path locality semigroup is a \fine locality semigroup, and hence is a  strong semigroup with zero.
\smallskip

\noindent

\section{Free \fine locality semigroups}
\mlabel{sec:FreefineLSG}

The main purpose of this section is to construct the free \fine locality semigroup on a locality set. We first recall the basic concepts of locality and give some necessary examples in Section~\mref{subsec:locsg}. We then introduce the path locality semigroups of a quiver in Section~\mref{subsec:pathlocsg}. We show that the path locality semigroup is  the free \fine locality semigroup on a locality set in Section~\mref{subsec:fineLSG}, stated in Theorem~\mref{thm:FreeStrLocSg}.

\subsection{Locality semigroups}
\mlabel{subsec:locsg}

We start by recalling the definition of locality semigroups and extend some related concepts of semigroups, such as subsemigroups and ideals, to that of locality semigroups. Several examples of locality semigroups which often arise in practice are given.

\begin{defn}{\bf(}\cite[Definition 2.1]{CGPZ1}{\bf)}
\begin{enumerate}
\item
A {\bf locality set} is a couple $(X,\top)$, where $X$ is a set and $\top\subseteq X\times X$ is a binary relation on $X$, called a {\bf locality relation} of locality set. When the underlying set $X$ needs to be emphasized,  we also denote $X\times_\top X:=\mtop_X:=\top$.
\item Let $(X,\mtop_X)$ be a locality set. Let $X'$ be a subset of $X$ and let $\mtop_{X'}:=(X'\times X')\cap \mtop_X$. Then the pair $(X',\mtop_{X'})$ is called a {\bf sub-locality set} of $(X,\mtop_X)$.
\item
Let $(X,\top)$ be a locality set. For any subset $U\subseteq X$, let
\begin{equation}
\lt:=\{x\in X\,|\,(x,u)\in X\times_\top X\,\text{for all}\, u\in U\}
\end{equation}
to be the {\bf left  polar subset} of $U$.
Similarly, we let
\begin{equation}
U^\top:=\{x\in X\,|\,(u,x)\in X\times_\top X\,\text{for all}\, u\in U\}
\end{equation}
to be the {\bf right polar subset} of $U$.
\item
Let $(X,\top)$ be a locality set. A map
$$\mu:X\times_{\top} X\to X,\quad (x,y)\mapsto \mu(x,y)\,\, \text{for all}\,\, (x,y)\in \top,$$
is called a {\bf partial binary operation on $X$}. The image $\mu(x,y)$ is written  simply  $x\cdot y$ or $xy$ in the sequel if this ambiguous notation will cause no confusion in context.
\end{enumerate}
\end{defn}
The first example of locality sets and partial binary operations is very simple.
\begin{exam}
Let $\R$ be the set of real numbers. Let $\mtop_\R:=\{(x,y)\in\R^2|\,y\neq 0\}$. Then $(\R,\mtop_\R)$ is a locality set. Take $U:=\{0\}$. Then $U^\top=\R \setminus\{0\}$ and $\lts U=\emptyset$. So $U^\top\neq \lts U$.
We note that the division $\div:\R\times_{\mtop_\R} \R\to \R,\,(x,y)\mapsto \frac{x}{y}$, is a partial binary operation on $\R$.
\end{exam}

\begin{defn}
\begin{enumerate}
\item
A {\bf locality semigroup} is a locality set $(S,\top)$ together with a partial binary operation defined on $S$:
$$\mu_S:S\times_\top S \to S,\quad (a,b)\mapsto a b\,\,\,\text{for all}\, (a,b)\in \top,$$
  such that for all subset $U$ of $S$,
\begin{equation}
\mu_S((\lt\times \lt)\cap \top)\subseteq \lt,
\mlabel{eq:leftclosed}
\end{equation}
and for all subset $U$ of $S$,
\begin{equation}
\mu_S((U^\top\times U^\top)\cap \top)\subseteq U^\mtop,
\mlabel{eq:rightclosed}
\end{equation}
and the {\bf locality associative law }holds: for all $a,b,c\in S$,
\begin{equation}
(a,b),(b,c),(a,c)\in \top\Rightarrow (a b) c=a (b c).
\mlabel{eq:locass}
\end{equation}
We denote a locality semigroup $(S,\top)$ with a partial binary operation $\mu_S$ by $(S,\top,\mu_S)$ or simply $(S,\top)$ if there is no danger of confusion.
\mlabel{it:defnlocsg}
\item We say that a locality semigroup $(S,\top)$ is {\bf transitive} if  $\top$ is transitive, that is, $(a,b),(b,c)\in \top \Rightarrow (a,c)\in \top$.
\item
A {\bf left identity} {\bf[right identity]} of  a locality semigroup $(S,\top)$ is an element $1\in S$  such  that for all $a\in S$,  $(1,a)\in \top$ and $1a=a$ [$(a,1)\in \top$ and $a1=a$].
An {\bf identity} of  $(S,\top)$  is an element $1\in S$ that  is  both a left  and right identity.
A locality semigroup $(S,\top)$ with an identity is called a {\bf locality monoid}, usually denoted by $(S,\top,1)$ or $(M,\top)$.
\item
A {\bf left zero element} {\bf [right zero element]} of  a locality semigroup $(S,\top)$ is an element $0\in S$  such  that for all $a\in S$,  $(0,a)\in \top$ and $0a=0$ [$(a,0)\in \top$ and $a0=0$].
If $0\in S$ is both a left and right zero element, we say that $0$ is a {\bf zero element} of $(S,\top)$, and that $(S,\top)$ is a {\bf locality semigroup with zero}.
\end{enumerate}
\mlabel{defn:locsg}
\end{defn}
\begin{remark} Let $(S,\top)$ be a locality set with a partial binary operation. For any given $a,b,c\in S$, suppose that $(a,b),(a,c),(b,c)\in \top$. On the one hand, if we take $U:=\{c\}$, then $a,b\in \lts U$, and so $(a,b)\in(\lt\times \lt)\cap \top$.  By Eq.~(\mref{eq:leftclosed}), we obtain $(ab,c)\in \top$. On the other hand, if we take $U:=\{a\}$,  then $b,c\in U^\top$, and hence $(a,bc)\in \top$ by Eq.~(\mref{eq:rightclosed}). This shows that both sides of Eq.~(\mref{eq:locass}) make sense.
\end{remark}

\begin{exam}Let $X$ be a nonempty set and let $\mathscr{P}(X)$ be the power set of $X$. Let $\mtop_\mathscr{P}:=\{(A,B)\in \mathscr{P}(X)\times \mathscr{P}(X)\,|\,A\subseteq B\}$. Then $(\mathscr{P}(X),\mtop_\mathscr{P})$ together with the union  operation $\cup$ or the intersection operation $\cap$ is a locality semigroup. Furthermore, $(\mathscr{P}(X),\mtop_\mathscr{P},\cup)$ is a locality semigroup with  left identity  $\emptyset$, and $(\mathscr{P}(X),\mtop_\mathscr{P},\cap)$ is a locality semigroup with left zero element $\emptyset$. Both are also transitive, since the inclusion relation $\subseteq$ on $\mathscr{P}(X)$ is a partial order.
\mlabel{exam:powset}
\end{exam}
\begin{exam}{\bf(}\cite[Example 3.6]{CGPZ1}{\bf)}
Let $\N^+$ be the set of positive integers and let $a,b\in \N^+$. Let $\gcd(a,b)$ be the greatest common divisor of $a,b$. Denote
$$\mtop_{cop}:=\{(a,b)\,|\, \gcd(a,b)=1,a,b\in \N^+\}.$$
Since $\gcd(a,b)=\gcd(b,a)$, we have $\lt=U^\top$ for all $U\subseteq \N^+$.  Let $u\in U$. If $\gcd(u,a)=1$ and $\gcd(u,b)=1$, then $\gcd(u,ab)=1$, and so Eqs.~(\mref{eq:leftclosed}) and (\mref{eq:rightclosed}) hold. Then $(\N^+,\mtop_{cop})$ with the ordinary multiplication on $\N^+$ is a locality monoid.
\mlabel{exam:PosIn}
\end{exam}

\begin{exam}
Let $\N$ be the set of nonnegative integers. Denote
$$\top:=\mtop_{cop}^{\,0}:=\mtop_{cop}\cup\{(0,0),(0,a),(a,0)\,|\, a\in \N^+\}.$$
Then $\{0\}^\mtop={}^\top\!\{0\}=\N$, and  if  $(0,a)\in \mtop_{cop}^{\,0}$ and $(0,b)\in \mtop_{cop}^{\,0}$ with $(a,b)\in \mtop_{cop}^{\,0}$ for all $a,b\in \N$, then $(0,a b)\in \mtop_{cop}^{\,0}$. Thus, Eqs.~(\mref{eq:leftclosed}) and (\mref{eq:rightclosed}) hold by Example~\mref{exam:PosIn},
and hence $(\N,\mtop_{cop}^{\,0})$ with the ordinary multiplication on $\N$ is a locality semigroup with zero.
\end{exam}
As a result, we obtain a way, analogous to that for semigroups~\mcite{How,PRe}, to construct a locality monoid or locality semigroup with zero from a locality semigroup as follows.
\begin{lem}Let $(S,\mtop_S)$ be a locality semigroup. Let $0,1\notin S$.
\begin{enumerate}
\item
Let $S^1:=S\cup\{1\}$. Denote $\mtop_{S^1}:=\mtop_S\cup\{(1,1),(1,a),(a,1)\,|\, a\in S\}$. Define a partial binary operation on $S^1$:
$$\mu_1:S^1\times_{\mtop_{S^1}}S^1\to S^1,\quad \mu_1(a,b):=\left\{\begin{array}{llll}ab,\quad\text{if}\,\,(a,b)\in \mtop_S;\\
b,\quad\text{if}\,\, a=1;\\
a,\quad\text{if}\,\, b=1.\end{array}\right.$$
Then $(S^1,\mu_1)$ is a locality monoid.
\item
 Let $S^0:=S\cup\{0\}$. Denote $\mtop_{S^0}:=\mtop_S\cup\{(0,0),(0,a),(a,0)\,|\, a\in S\}$.  Define a partial binary operation on $S^0$:
$$\mu_0:S^0\times_{\mtop_{S^0}}S^0\to S^0,\quad \mu_0(a,b):=\left\{\begin{array}{llll}ab,\quad\text{if}\,\,(a,b)\in \mtop_S;\\
0,\quad\text{if}\,\, a=0\,\text{or}\,\, b=0.\end{array}\right.$$
Then $(S^0,\mu_0)$ is a locality semigroup with zero.
\end{enumerate}
\end{lem}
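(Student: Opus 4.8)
The plan is to verify the four locality-semigroup axioms for each of the two constructions directly, leaning on the fact that $(S,\mtop_S)$ already satisfies them and that the new elements $0$ and $1$ are "universal" in the sense that they are related to everything in the new locality relation. Before anything else, I would record the polar subsets of an arbitrary $U\subseteq S^1$ (resp. $U\subseteq S^0$): since $1$ (resp. $0$) is $\mtop$-related to every element on both sides, we always have $1\in{}^\top U$ and $1\in U^\top$ (resp. $0\in{}^\top U$, $0\in U^\top$). Moreover, for $U\subseteq S$ the restriction ${}^\top U\cap S$ is exactly the left polar subset computed inside $S$, and if $1\in U$ (resp. $0\in U$) then the polar subsets are unchanged by deleting it, because everything is related to $1$ (resp. $0$). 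This bookkeeping lemma is the only slightly fiddly ingredient, and it is what makes the closure axioms (\mref{eq:leftclosed}) and (\mref{eq:rightclosed}) routine.

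For part (i): first I would check that $\mu_1$ is well defined on $S^1\times_{\mtop_{S^1}}S^1$ — the three clauses are exhaustive and overlap only on pairs $(1,1)$, where both give $1$, so there is no ambiguity. Next, the closure axiom (\mref{eq:leftclosed}): take $U\subseteq S^1$ and $(a,b)\in({}^\top U\times{}^\top U)\cap\mtop_{S^1}$; I split into cases according to whether $a$ or $b$ equals $1$. If $a=1$ then $ab=b\in{}^\top U$ by hypothesis; if $b=1$ then $ab=a\in{}^\top U$; and if $a,b\in S$ then $(a,b)\in\mtop_S$, and I intersect $U$ with $S$, apply (\mref{eq:leftclosed}) for $S$ to conclude $ab\in{}^\top(U\cap S)$, and note ${}^\top(U\cap S)\cap S\subseteq{}^\top U$ together with the fact that $ab\in S$ is automatically related to every element of $U\setminus S\subseteq\{1\}$. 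The right-closure axiom (\mref{eq:rightclosed}) is symmetric. For locality associativity (\mref{eq:locass}): suppose $(a,b),(b,c),(a,c)\in\mtop_{S^1}$. If none of $a,b,c$ equals $1$ this is exactly (\mref{eq:locass}) for $S$; if one of them is $1$, then $\mu_1$ on that slot is the identity map, and a short case check (three cases, $a=1$, $b=1$, $c=1$) shows both sides collapse to the product of the other two. Finally $1$ is an identity by construction: for every $a\in S^1$ we have $(1,a),(a,1)\in\mtop_{S^1}$ and $\mu_1(1,a)=a=\mu_1(a,1)$, including $a=1$ itself.

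Part (ii) is completely analogous, and in fact simpler, because the adjoined element $0$ absorbs rather than acts as identity. Well-definedness of $\mu_0$: the two clauses overlap only at pairs of the form $(0,b)$ or $(a,0)$ with the other entry also in $S$, but there the first clause does not apply (since $(0,b)\notin\mtop_S$), so there is no conflict; and on $(0,0)$ only the second clause applies. For (\mref{eq:leftclosed})--(\mref{eq:rightclosed}) and (\mref{eq:locass}) I would run the same case analysis: whenever a $0$ appears in a product the result is $0$, and $0$ is related to everything, so every required membership or equality holds trivially; the all-in-$S$ case is inherited from $(S,\mtop_S)$. Then $0$ is a zero element since $(0,a),(a,0)\in\mtop_{S^0}$ and $\mu_0(0,a)=0=\mu_0(a,0)$ for all $a\in S^0$, $0$ included.

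I do not expect a genuine obstacle here; the statement is a structural "adjoin an identity / adjoin a zero" lemma and the proof is a disciplined case analysis. The only place to be careful — and the step most worth spelling out — is the closure axioms (\mref{eq:leftclosed}) and (\mref{eq:rightclosed}) in the mixed case, where one must correctly relate the polar subset of $U$ taken in $S^1$ (resp. $S^0$) to the polar subset of $U\cap S$ taken in $S$; this is exactly where the bookkeeping lemma on polar subsets is used, and getting the quantifiers right there is the crux.
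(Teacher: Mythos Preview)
Your proposal is correct; the case analysis you outline is the natural (and essentially the only) way to verify the axioms, and your bookkeeping on polar subsets---namely that ${}^\top U\cap S$ computed in $S^1$ coincides with ${}^{\mtop_S}(U\cap S)$ computed in $S$, because the added pairs all involve the new element---is exactly the ingredient needed for the closure conditions. The paper states this lemma without proof, so there is nothing to compare against: your argument would serve perfectly well as the omitted verification.
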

\begin{defn}Let $(S,\mtop_S)$ be a locality semigroup. Let $\emptyset\neq A\subseteq S$ and let $\mtop_A:=(A\times A) \cap \mtop_S$.
\begin{enumerate}
\item
A sub-locality set $(A,\mtop_A)$ is called a {\bf sub-locality semigroup} of $(S,\mtop_S)$ if it is closed under the partial binary operation of $(S,\mtop_S)$, i.e. it satisfies the condition:
for all $(a,b)\in \mtop_A$, $ab\in A$;
\item
Let $\mtop_{\ell}:=(S\times A) \cap \mtop_S$. The pair $(A,\mtop_\ell)$ is called a {\bf left locality ideal}  of $(S,\mtop_S)$ if it satisfies the condition: for all $(s,a)\in \mtop_\ell$, $sa\in A$;
\item
Let $\mtop_r:=(A\times S) \cap \mtop_S$. The pair $(A,\mtop_r)$ is called a {\bf right locality ideal} of $(S,\mtop_S)$ if it satisfies the condition: for all $(a,s)\in \mtop_r$, $as\in A$;
\item
The pair $(A,\mtop_\ell\cup\mtop_r)$ is called a {\bf locality ideal} of  $(S,\mtop_S)$  if  $(A,\mtop_\ell)$ is a left locality ideal and $(A,\mtop_r)$ is a right locality ideal, that is, for all $s\in S$ and all $a\in A$, if $(s,a)\in \mtop_\ell$ and $(a,s)\in \mtop_r$, then $sa\in A$ and $as\in A$.
\end{enumerate}
\end{defn}
We know that if  $(A,\mtop_\ell\cup\mtop_r)$ is a  locality ideal, then $(A,\mtop_A)$ is a locality semigroup,  but not vice versa. For example,
\begin{exam}
\begin{enumerate}
\item
According to Example~{\mref{exam:PosIn}}, we let $\mo: =2\N+1$ be the set of positive odd integers, and let $\mtop_\mo:=(\mo\times \mo)\cap \mtop_{cop}$. Then $(\mo,\mtop_\mo)$ is a sub-locality semigroup of $(\N^+,\mtop_{cop})$, since the multiplication of two odd numbers is still odd.  But if we let $\mtop_{\ell}:=(\N^+\times \mo) \cap \mtop_{cop}$, then $(\mo,\mtop_\ell)$ is not a left locality ideal of $(\N^+,\mtop_{cop})$ (although $(2,3)\in\mtop_{\ell}$,  $6\notin \mo$). Also, let $\mtop_{r}:=(\mo\times \N^+) \cap \mtop_{cop}$. Then $(\mo,\mtop_r)$ is also not a right locality ideal. Thus, $(\mo,\mtop_\ell \cup\mtop_r)$ is not a locality ideal of $(\N^+,\mtop_{cop})$.
\item
By Example~{\mref{exam:powset}}, we choose a fixed $x_0\in X$ and let $\mathscr{P}_{x_0}:=\{A\in \mathscr{P}(X)\,|\,x_0\in A\}$. Denote $\mtop_{\mathscr{P}_{x_0}}:=(\mathscr{P}_{x_0}\times\mathscr{P}_{x_0})\cap \mtop_\mathscr{P}$. Then we see that $(\mathscr{P}_{x_0},\mtop_{\mathscr{P}_{x_0}})$ is a sub-locality semigroup  of $(\mathscr{P}(X),\mtop_\mathscr{P},\cup)$. In addition, if we let $\mtop_\ell:=(\mathscr{P}(X)\times \mathscr{P}_{x_0})\cap \mtop_\mathscr{P}$, then $(\mathscr{P}_{x_0},\mtop_\ell)$ is a left locality ideal of $(\mathscr{P}(X),\mtop_\mathscr{P},\cup)$. Let $\mtop_r:=(\mathscr{P}_{x_0}\times \mathscr{P}(X))\cap \mtop_\mathscr{P}$. Then $(\mathscr{P}_{x_0},\mtop_r)$ is a right locality ideal of $(\mathscr{P}(X),\mtop_\mathscr{P},\cup)$. Thus, $(\mathscr{P}_{x_0},\mtop_\ell\cup\mtop_r)$ is a locality ideal.
\end{enumerate}
\end{exam}

\begin{remark}
For every sub-locality set $(A,\mtop_A)$ of $(S,\mtop_S)$, there is at least sub-locality semigroup containing  $(A,\mtop_A)$. Denote by $\{ (A_i,\mtop_{A_i})\,|\, i\geq 1\}$, where $(A_i,\mtop_{A_i})$ is a sub-locality semigroup of $(S,\mtop_S)$ containing  $(A,\mtop_A)$ for each $i\geq 1$, the set consisting of  all sub-locality semigroups containing $(A,\mtop_A)$.  Note that
\begin{equation}
\cap_{i\geq 1}\mtop_{A_i}=\big((\cap_{i\geq 1}A_i)\times (\cap_{i\geq 1}A_i)\big)\cap \mtop_S.
\mlabel{eq:Inter}
\end{equation}
Let
$$\bigcap_{i\geq 1}(A_i,\mtop_i):=(\cap_{i\geq 1}A_i, \cap_{i\geq 1}\mtop_{A_i})$$
be the intersection of all sub-locality semigroups containing $(A,\mtop_A)$. Then by Eq.~(\mref{eq:Inter}), $\bigcap_{i\geq 1}(A_i,\mtop_i)$ is a sub-locality semigroup of $(S,\mtop_S)$. We denote it by $\langle A,\top_A \rangle$, and we  call  $\langle A,\top_A \rangle$ the {\bf sub-locality semigroup  of $(S,\mtop_S)$ generated by the sub-locality set $(A,\mtop_A)$}, i.e. the smallest sub-locality semigroup containing $(A,\mtop_A)$.
\end{remark}

\begin{defn}
Let $(X,\mtop_X)$ and $(Y,\mtop_Y)$ be locality sets. A set map $\phi:X\to Y$ is called a {\bf locality map} if it satisfies $(\phi \times \phi)(\mtop_X)\subseteq \mtop_Y$, that is, $(\phi\times \phi)(x_1,x_2):=(\phi(x_1),\phi(x_2))\in \mtop_Y$ for all $(x_1,x_2)\in \mtop_X$.
\end{defn}

\begin{defn}
Let $(S_1,\mtop_{S_1},\cdot_{S_1})$ and $(S_2,\mtop_{S_2},\cdot_{S_2})$ be locality semigroups. A set map $\phi:S_1\to S_2$ is called a {\bf locality semigroup homomorphism} if it satisfies the following conditions:
\begin{enumerate}
\item
$\phi$ is a locality map;
\item
$\phi$ is {\bf locality multiplicative}: for all $(a,b)\in \mtop_{S_1}$, we have $\phi(a\cdot_{S_1} b)=\phi (a)\cdot_{S_2}\phi(b)$.
\end{enumerate}
\end{defn}
Furthermore, let $(S_1,\mtop_{S_1},1_{S_1})$ and $(S_2,\mtop_{S_2}, 1_{S_2}) $ be monoids. We say that a locality semigroup homomorphism $\phi: (S_1,\mtop_{S_1},1_{S_1})\to (S_2,\mtop_{S_2},1_{S_2}$) is a {\bf locality monoid homomorphism} if $\phi (1_{S_1})=1_{S_2}$.
For instance, $(\N^+,\mtop_{cop})$ is a locality monoid according to Example~\mref{exam:PosIn}~\mcite{CGPZ1}. Then the Euler's totient function $\varphi:\N^+\to \N^+,\,n\mapsto \varphi(n)$, counting the positive integers coprime to (but not bigger than) $n$, is a locality monoid homomorphism from $(\N^+,\mtop_{cop})$ to $(\N^+,\mtop_{ful})$, where $\mtop_{ful}$ is the full relation on $\N^+$.

\subsection{Path locality semigroups of a quiver}
\mlabel{subsec:pathlocsg}
Now we introduce the path locality semigroups of a quiver.
\begin{defn}
A {\bf quiver}  is a quadruple $ Q:=(Q_0,Q_1,s,t)$, where
\begin{enumerate}
\item
$Q_0$ is a  set, called the {\bf vertex set};
\item
$Q_1$ is also a  set, called the {\bf arrow set};
\item
$s:Q_1\to Q_0$ is a map, called the {\bf source function}, and $t:Q_1\to Q_0$ is a map,  called the {\bf target function}.
\end{enumerate}
\end{defn}
We shall denote the vertices $x,y,z,\cdots$ in the vertex set $Q_0$ and denote the arrows $\alpha, \beta,\gamma,\cdots$ in the arrow set $Q_1$. For every arrow $\alpha \in Q_1$, if $s(\alpha)=x$ [resp. $t(\alpha)=y$], then we call $x$ [resp. $y$] a {\bf source} [resp. {\bf target}] of $\alpha$. An arrow with a source $x$ and a target $y$ will be denoted by $\alpha:x\to y$, or simply by $x\overset{\alpha}{\to} y$. Thus, for example, the quiver with vertices $x,y$ and arrows $\alpha:x\to y$, $\beta_1: x\to x$ and $\beta_2: y\to y$, can be depicted  as follows:
\begin{equation}
 \xymatrix{
    x \ar@(ul,dl)[]_{\beta_1} \ar[r]^{\alpha} & y \ar@(ur,dr)[]^{\beta_2}}.
\mlabel{eq:quiver}
\end{equation}

\begin{defn}
Let $Q:=(Q_0,Q_1,s,t)$ be a quiver and let $k\geq 1$.
\begin{enumerate}
\item
A {\bf path} in $Q$ is either a vertex $v\in Q_0$, usually called an {\bf empty path} or a {\bf trivial path} and often denoted by $e_v$, or a sequence $p:=\alpha_1\alpha_2\cdots\alpha_k$ of arrows, where $\alpha_i\in Q_1$ for each $1\leq i\leq k$, and  $t(\alpha_i)=s(\alpha_{i+1})$ for all $1\leq i\leq k-1$.
\item
Let $k\geq 1$. For all nonempty path $p=\alpha_1\alpha_2\cdots\alpha_k$ in $Q$, where $\alpha_i\in Q_1$ for each $i=1,\cdots,k$, we call $s(p):=s(\alpha_1)$ the {\bf source} of $p$ and call $t(p):=t(\alpha_k)$ the {\bf target} of $p$. In that case we say that the {\bf length} of $p$ is $k$, denoted by $\ell(p)$. By convention, if $p=e_v$ is an empty path, we say that $s(p)=t(p)=v$, and the length of $p$ is $0$. Then $\ell(p)=0$ if and only if $p$ is an empty path.
\item
An {\bf oriented cycle} is a path $p$ with $s(p)=t(p)$.
\end{enumerate}
\end{defn}
When there is no danger of confusion, we also denote $Q:=(Q_0,Q_1)$.
\begin{remark}
We denote by $\calp$ the set of paths in a quiver $Q$.
By the definition of length of a path in $Q$, we identity $Q_0$ and $Q_1$ with the set of all paths of length $0$ and the set of all paths of length $1$, respectively.
More generally, for all $n\geq 0$, we define
\begin{equation}
Q_n:=\{p\in \calp\,|\, \ell(p)=n\}.
\end{equation}
Thus we get
\begin{equation}
\calp=\bigsqcup_{n\geq 0} Q_n,
\end{equation}
the disjoint union of the sets $Q_n$.
\end{remark}

\begin{exam}
Let $Q$ be the quiver
$$\xymatrix{
&x\ar[r]^{\alpha} &y,  \,\,y \ar[r]^{\beta} &z.}
$$
Then $\calp=\{e_x,e_y,e_z,\alpha,\beta,\alpha\beta\}$, where
$$\xymatrix{
&\alpha\beta:=x\ar[r]^{\alpha} &y \ar[r]^{\beta} &z,}$$ the composition of  paths $\alpha$ and $\beta$.
\end{exam}
Let $Q$ be a quiver.
Let $\calp$ be the set of all paths in $Q$.  Then we define
\begin{equation}
\mtop_\calp:=\calp \times_\top \calp:=\{(p,q)\,|\, t(p)=s(q),p,q\in \calp\}.
\mlabel{eq:mtopcalp}
\end{equation}
For all $(p,q)\in \mtop_\calp$, we define $pq$ to be the composition of  paths $p$ and $q$.
\begin{defn}
Let $Q$ be a quiver and
let $\calp$ be the set of all paths in $Q$. Let $p\in \calp$.
\begin{enumerate}
\item
The expression
$$p=v_0\alpha_1v_1\alpha_2\cdots v_{k-1}\alpha_k v_k,$$
where $v_i\in Q_0$ for $0\leq i\leq k$ and $\alpha_j\in Q_1$, and $s(\alpha_j)=v_{j-1}$ and $t(\alpha_j)=v_j$ for $1\leq j\leq k$, is called the {\bf standard decomposition of $p$}.
\item
The expression
$$p:=\alpha_1\alpha_2\cdots\alpha_k,$$
where $\alpha_i\in Q_1$ for $1\leq i\leq k$, is called the {\bf standard decomposition of $p$ into arrows}.
\end{enumerate}
\end{defn}
For every $p\in \calp$, we know that the standard decomposition of $p$ into arrows  is unique.
Here is a natural way to construct a locality semigroup  from the set of paths $\calp$.
\begin{prop}
Let $Q$ be a quiver and
let $\calp$ be the set of all paths in $Q$.  Denote
$$\mtop_\calp:=\calp \times_\top \calp:=\{(p,q)\,|\, t(p)=s(q),p,q\in \calp\}.$$
A partial binary operation $\mu_\calp:\calp\times_\top\calp\to \calp$ is  given by $(p,q)\mapsto pq$,  the composition of  paths $p$ and $q$. Then $(\calp,\mtop_\calp,\mu_\calp)$  is a locality semigroup, called a {\bf path locality semigroup} of $Q$.
\mlabel{prop:pathsg}
\end{prop}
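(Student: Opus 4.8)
The plan is to check directly the three defining conditions of a locality semigroup in Definition~\mref{defn:locsg}. Since $\mtop_\calp\subseteq\calp\times\calp$ by its very definition, $(\calp,\mtop_\calp)$ is a locality set, so only Eqs.~(\mref{eq:leftclosed}), (\mref{eq:rightclosed}) and (\mref{eq:locass}) remain to be verified. The one elementary fact I would isolate at the outset is that for any composable pair $(p,q)\in\mtop_\calp$ one has $s(pq)=s(p)$ and $t(pq)=t(q)$, with the convention $s(e_v)=t(e_v)=v$ for trivial paths; everything else is bookkeeping on sources and targets.

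First I would spell out the polar subsets. For $U\subseteq\calp$, unwinding the definition of $\mtop_\calp$ gives
$$\lts U=\{p\in\calp\,|\,t(p)=s(u)\text{ for all }u\in U\},\qquad U^\top=\{q\in\calp\,|\,s(q)=t(u)\text{ for all }u\in U\};$$
in particular $\lts U=U^\top=\calp$ when $U=\emptyset$, while for $U\neq\emptyset$ the set $\lts U$ (resp. $U^\top$) is nonempty precisely when all elements of $U$ share a common source $x$ (resp. a common target $y$), in which case it equals $\{p\,|\,t(p)=x\}$ (resp. $\{q\,|\,s(q)=y\}$). With this description, Eq.~(\mref{eq:leftclosed}) is immediate: if $p,q\in\lts U$ and $(p,q)\in\mtop_\calp$, then $t(pq)=t(q)=s(u)$ for every $u\in U$, hence $pq\in\lts U$. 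Symmetrically, if $q,r\in U^\top$ and $(q,r)\in\mtop_\calp$, then $s(qr)=s(q)=t(u)$ for every $u\in U$, hence $qr\in U^\top$, which is Eq.~(\mref{eq:rightclosed}).

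For the locality associative law, suppose $(p,q),(q,r),(p,r)\in\mtop_\calp$, so in particular $t(p)=s(q)$ and $t(q)=s(r)$. Then $t(pq)=t(q)=s(r)$ and $s(qr)=s(q)=t(p)$, so both $(pq)r$ and $p(qr)$ are defined, and the identity $(pq)r=p(qr)$ reduces, after passing to the standard decomposition into arrows, to the associativity of concatenation of finite sequences, the trivial paths $e_v$ acting as left and right units wherever they occur. The verification is routine and presents no real obstacle; the only point worth a little care --- and the reason I would isolate the source/target formulas first and treat the empty-path and $U=\emptyset$ cases explicitly --- is ensuring that trivial paths are not silently overlooked in any of the three conditions.
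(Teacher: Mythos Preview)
Your proof is correct and follows essentially the same route as the paper's: both verify Eqs.~(\mref{eq:leftclosed}) and (\mref{eq:rightclosed}) directly from the identities $s(pq)=s(p)$, $t(pq)=t(q)$, and then deduce the locality associative law from $t(p)=s(q)$, $t(q)=s(r)$ by observing that both bracketings are defined and equal the same concatenation. Your additional remarks on the $U=\emptyset$ case and trivial paths are harmless elaborations that the paper omits but that do not change the argument.
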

\begin{proof}
We shall prove that $\mu_\calp((U^{\mtop_\calp}\times U^{\mtop_\calp})\cap \mtop_\calp)\subseteq U^{\mtop_\calp}$ and $\mu_\calp(({}^{\mtop_\calp}\!U\times{}^{\mtop_\calp}\!U)\cap \mtop_\calp)\subseteq {}^{\mtop_\calp}\!U$ for all subset $U$ of $\calp$.
Let $(p,q)\in (U^{\mtop_\calp}\times U^{\mtop_\calp})\cap \mtop_\calp$. This means that  $(u,p)\in \mtop_\calp$ and $(u,q)\in \mtop_\calp$ for all $u\in U$, and so $t(u)=s(p)=s(q)$. Then  $s(pq)=s(p)=t(u)$, and hence $(u,pq)\in \mtop_\calp$ holds for all $u\in U$. Thus $pq\in U^{\mtop_\calp}$. On the other hand, let $(p,q)\in ({}^{\mtop_\calp}\!U\times{}^{\mtop_\calp}\!U)\cap \mtop_\calp$. Then $(p,u)\in \mtop_\calp$ and $(q,u)\in \mtop_\calp$ for all $u\in U$, and so $t(p)=s(u)=t(q)$. Thus $t(pq)=t(q)=s(u)$. Then we get $(pq,u)\in \mtop_\calp$ for all $u\in U$. This gives $pq\in {}^{\mtop_\calp}\!U$.

Next we verify that the locality associative law holds, that is,
$(p_1 p_2) p_3=p_1 (p_2 p_3) $ for all $(p_1,p_2),(p_1, p_3),(p_2,p_3)\in \mtop_\calp$.
Let $(p_1,p_2),(p_1, p_3),(p_2,p_3)\in \mtop_\calp$. Then $t(p_1)=s(p_2)$ and $t(p_2)=s(p_3)$. Thus $t(p_1p_2)=t(p_2)=s(p_3)$ and $t(p_1)=s(p_2)=s(p_2p_3)$, and hence $(p_1p_2,p_3)\in \mtop_\calp$ and $(p_1,p_2p_3)\in \mtop_\calp$. This shows that
$$(p_1 p_2) p_3=p_1p_2p_3=p_1(p_2 p_3).$$
\end{proof}

\subsection{Free \fine locality semigroups on a locality set}
\mlabel{subsec:fineLSG}

We then give a explicit construction of free objects in the category of \fine locality semigroups. We begin by introducing the definition of \fine locality semigroups.
\begin{defn}
Let $(S,\top)$ be a locality set. A {\bf \fine locality semigroup} is a locality set $(S,\top)$ together with a partial binary operation:
$$\mu_S:S\times_\top S \to S,\quad (a,b)\mapsto ab\,\,\,\text{for all}\, (a,b)\in \top,$$
such that for all $a,b,c\in S$,
\begin{enumerate}
\item If $(a,b)\in\top$, then $(b,c)\in\top$ if and only if $(ab,c)\in\top$; and
\mlabel{it:fineone}
\item If $(b,c)\in\top$, then $(a,b)\in \top$ if and only if $(a,bc)\in\top$; and
\mlabel{it:finetwo}
\item
If $(a,b),(b,c)\in \top $, then
$(ab)c=a(bc).$
\mlabel{it:fineass}
\end{enumerate}
\mlabel{defn:flocsg}
\end{defn}
\begin{remark}
By Definition~\mref{defn:flocsg}(\mref{it:fineone}) and (\mref{it:finetwo}), we have
\begin{equation}
(a,b),(b,c)\in\top\Rightarrow (ab,c),(a,bc)\in\top\quad \text{for all}\,\,a,b,c\in S.
\mlabel{eq:refinedremark}
\end{equation}
Thus $(ab)c$ and $a(bc)$ make sense.
\end{remark}
\begin{prop}Let $(\calp,\mtop_\calp,\mu_\calp)$ be the path locality semigroup as defined in Proposition~\mref{prop:pathsg}. Then
 $(\calp,\mtop_\calp,\mu_\calp)$ is also a refined locality semigroup.
 \mlabel{prop:pathresg}
\end{prop}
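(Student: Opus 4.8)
The plan is to verify the three axioms of Definition~\mref{defn:flocsg} directly from the description of $\mtop_\calp$ in Eq.~(\mref{eq:mtopcalp}), using that $(\calp,\mtop_\calp,\mu_\calp)$ is already a locality semigroup by Proposition~\mref{prop:pathsg}. The one elementary input I would record first is the pair of identities
\[
s(pq)=s(p)\qquad\text{and}\qquad t(pq)=t(q)\qquad\text{for all }(p,q)\in\mtop_\calp,
\]
proved by a short case check according to whether $p$ or $q$ is a trivial path $e_v$: if both are nontrivial this is clear from concatenating the standard decompositions into arrows, and if one factor is trivial the composition returns the other factor unchanged, so the source and target behave as stated.

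Granting these two identities, axiom~(\mref{it:fineass}) of Definition~\mref{defn:flocsg} is immediate, and was in fact already checked inside the proof of Proposition~\mref{prop:pathsg}: from $(p_1,p_2),(p_2,p_3)\in\mtop_\calp$ one gets $t(p_1p_2)=t(p_2)=s(p_3)$ and $t(p_1)=s(p_2)=s(p_2p_3)$, whence $(p_1p_2,p_3),(p_1,p_2p_3)\in\mtop_\calp$ and $(p_1p_2)p_3=p_1p_2p_3=p_1(p_2p_3)$. For axiom~(\mref{it:fineone}), assume $(p,q)\in\mtop_\calp$, i.e.\ $t(p)=s(q)$; then for any $r\in\calp$,
\[
(q,r)\in\mtop_\calp\iff t(q)=s(r)\iff t(pq)=s(r)\iff (pq,r)\in\mtop_\calp,
\]
the middle step using $t(pq)=t(q)$. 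Axiom~(\mref{it:finetwo}) is the mirror statement: assuming $(q,r)\in\mtop_\calp$, i.e.\ $t(q)=s(r)$, then for any $p\in\calp$ one has $(p,q)\in\mtop_\calp\iff t(p)=s(q)\iff t(p)=s(qr)\iff (p,qr)\in\mtop_\calp$, using $s(qr)=s(q)$.

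I do not expect a genuine obstacle. The only point deserving a little care is the endpoint bookkeeping for empty paths in the identities $s(pq)=s(p)$ and $t(pq)=t(q)$, together with the remark that the ``only if'' halves of (\mref{it:fineone}) and (\mref{it:finetwo})---which are strictly stronger than the closure conditions~(\mref{eq:leftclosed}) and~(\mref{eq:rightclosed}) already available for locality semigroups---follow at once from those same two identities. Thus the whole argument reduces to unwinding the definition of $\mtop_\calp$.
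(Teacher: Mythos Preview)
Your proposal is correct and follows essentially the same route as the paper's own proof: both arguments reduce all three axioms of Definition~\mref{defn:flocsg} to the identities $s(pq)=s(p)$ and $t(pq)=t(q)$, and then check (\mref{it:fineone}), (\mref{it:finetwo}), (\mref{it:fineass}) by unwinding the definition of $\mtop_\calp$. Your version is slightly more explicit in isolating those two endpoint identities up front and in noting the trivial-path case, but the logical content is the same.
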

\begin{proof}
We first show that $(\calp,\mtop_\calp)$ satisfies the condition (\mref{it:fineone}) of  Definition~\mref{defn:flocsg}. For all $p_1,p_2,p_3\in\calp$, assume $(p_1,p_2)\in \mtop_\calp$. If $(p_2,p_3)\in\mtop_\calp$, then $t(p_2)=s(p_3)$, and so $t(p_1p_2)=t(p_2)=s(p_3)$. Thus, $(p_1p_2,p_3)\in\mtop_\calp$. For the converse, if $(p_1p_2,p_3)\in\mtop_\calp$, then $t(p_2)=t(p_1p_2)=s(p_3)$, and hence $(p_2,p_3)\in\mtop_\calp$. Secondly, we  verify that the condition (\mref{it:finetwo}) of Definition~\mref{defn:flocsg} holds. Suppose $(p_2,p_3)\in\mtop_\calp$. If $(p_1,p_2)\in\mtop_\calp$, then $t(p_1)=s(p_2)=s(p_2p_3)
$, and thus $(p_1,p_2p_3)\in\mtop_\calp$. Conversely, if $(p_1,p_2p_3)\in\mtop_\calp$, then $t(p_1)=s(p_2p_3)=s(p_2)$, and so $(p_1,p_2)\in\mtop_\calp$. Finally, we prove that  $(\calp,\mtop_\calp)$ satisfies the condition~(\mref{it:fineass}) of Definition~\mref{defn:flocsg}. Suppose $(p_1,p_2),(p_2,p_3)\in\mtop_\calp$. Then $t(p_1)=s(p_2)$ and $t(p_2)=s(p_3)$, and this gives equations $t(p_1)=s(p_2p_3)$ and $t(p_1p_2)=s(p_3)$. Hence $(p_1,p_2p_3),(p_1p_2,p_3)\in\mtop_\calp$, and thus $(p_1p_2)p_3=p_1(p_2p_3)$.
\end{proof}

\begin{lem}
Let $(S,\mtop_S)$ be a  \fine locality semigroup and let $n\geq 2$.  Let $(X,\mtop_X)$ be a locality set and let $(x_i,x_{i+1})\in \mtop_X$ for $i=1,\cdots,n-1$. If $f:(X,\mtop_X)\to (S,\mtop_S)$ is a locality map, then
\begin{equation}
(f(x_1)\cdots f(x_{n-1}),f(x_{n}))\in \mtop_S
\mlabel{eq:fs1}
\end{equation}
 and
\begin{equation}
(f(x_1),f(x_2)\cdots f(x_{n}))\in \mtop_S.
\mlabel{eq:fs2}
\end{equation}
\mlabel{lem:fs}
\end{lem}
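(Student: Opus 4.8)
The plan is to prove the two memberships Eq.~(\mref{eq:fs1}) and Eq.~(\mref{eq:fs2}) simultaneously by induction on $n$, carrying along as part of the induction hypothesis the tacit assertion that the iterated products occurring in them are actually defined in $(S,\mtop_S)$. I read $f(x_1)\cdots f(x_m)$ as the left-nested product $(\cdots(f(x_1)f(x_2))\cdots)f(x_m)$ (for $m=1$ this is just $f(x_1)$). Abbreviate $a_i:=f(x_i)$. Since $f$ is a locality map and $(x_i,x_{i+1})\in\mtop_X$, we have $(a_i,a_{i+1})\in\mtop_S$ for $i=1,\dots,n-1$; this, together with the two ``if and only if'' clauses of Definition~\mref{defn:flocsg}, is all that the proof uses.

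For the base case $n=2$, both Eq.~(\mref{eq:fs1}) and Eq.~(\mref{eq:fs2}) assert precisely $(a_1,a_2)\in\mtop_S$, which holds because $f$ is a locality map, and the ``products'' $a_1$ and $a_2$ are single elements, hence defined. For the inductive step, assume the statement for every sequence of length $n$ whose consecutive terms are $\mtop_X$-related, and let $x_1,\dots,x_{n+1}$ satisfy $(x_i,x_{i+1})\in\mtop_X$ for $1\le i\le n$. Applying the induction hypothesis to the initial segment $x_1,\dots,x_n$ yields that $u:=a_1\cdots a_{n-1}$ is defined, that $(u,a_n)\in\mtop_S$ --- whence $a_1\cdots a_n=u\,a_n$ is defined --- and that $(a_1,\,a_2\cdots a_n)\in\mtop_S$. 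Applying it to the terminal segment $x_2,\dots,x_{n+1}$ yields that $v:=a_2\cdots a_n$ is defined and that $(v,a_{n+1})\in\mtop_S$, whence $a_2\cdots a_{n+1}=v\,a_{n+1}$ is defined.

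It then remains to feed these data into the refined axioms. For Eq.~(\mref{eq:fs1}): from $(u,a_n)\in\mtop_S$ and $(a_n,a_{n+1})\in\mtop_S$, Definition~\mref{defn:flocsg}(\mref{it:fineone}) applied to the triple $(u,a_n,a_{n+1})$ gives $(u\,a_n,\,a_{n+1})\in\mtop_S$, i.e. $(a_1\cdots a_n,\,a_{n+1})\in\mtop_S$. For Eq.~(\mref{eq:fs2}): from $(v,a_{n+1})\in\mtop_S$ and $(a_1,v)\in\mtop_S$ --- the latter being the conclusion $(a_1,\,a_2\cdots a_n)\in\mtop_S$ produced above --- Definition~\mref{defn:flocsg}(\mref{it:finetwo}) applied to the triple $(a_1,v,a_{n+1})$ gives $(a_1,\,v\,a_{n+1})\in\mtop_S$, i.e. $(a_1,\,a_2\cdots a_{n+1})\in\mtop_S$. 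This closes the induction.

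The argument is short, and the only real point of care --- hence the ``main obstacle'' --- is bookkeeping: the well-definedness of the intermediate left-nested products must travel inside the induction (otherwise the expressions $f(x_1)\cdots f(x_{n-1})$ and $f(x_2)\cdots f(x_n)$ are not even meaningful before one asks whether the displayed pairs lie in $\mtop_S$), and one must invoke the refined axiom in the correct direction --- clause (\mref{it:fineone}) to absorb a product sitting in the \emph{left} slot of a pair, clause (\mref{it:finetwo}) to absorb one sitting in the \emph{right} slot. If one prefers not to privilege the left-nested bracketing, the same induction --- now also using the associativity clause Definition~\mref{defn:flocsg}(\mref{it:fineass}) --- first shows that every bracketing of $f(x_1)\cdots f(x_m)$ is defined and that they all coincide, after which the reasoning above applies verbatim.
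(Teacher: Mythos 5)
Your proof is correct and follows essentially the same route as the paper's: induction on $n$, applying the induction hypothesis to the initial segment $x_1,\dots,x_n$ and the terminal segment $x_2,\dots,x_{n+1}$, and then invoking clauses (\mref{it:fineone}) and (\mref{it:finetwo}) of Definition~\mref{defn:flocsg} to absorb the extra factor on the left and right respectively. The only difference is that you make explicit the bookkeeping about well-definedness of the left-nested intermediate products, which the paper leaves implicit.
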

\begin{proof}
We prove Eqs.~(\mref{eq:fs1}) and (\mref{eq:fs2}) by induction on $n\geq 2$. For $n=2$, we have $(f(x_1),f(x_2))\in \mtop_S$ by $f$ being a locality map.
Assume that Eqs.~(\mref{eq:fs1}) and (\mref{eq:fs2}) have been proved for $n\leq k$. Consider $n=k+1$. By hypothesis, $(x_i,x_{i+1})\in \mtop_X$ for $i=1,\cdots,k$ . Since $f$ is a locality map, $(f(x_{i}),f(x_{i+1}))\in \mtop_S$ for all $i$, and especially $(f(x_k),f(x_{k+1}))\in\mtop_S$. By the induction hypothesis, we get
$$(f(x_1)\cdots f(x_{k-1}), f(x_{k}))\in \mtop_S,\quad
(f(x_1),f(x_2)\cdots f(x_{k}))\in \mtop_S$$
 and $ (f(x_2)\cdots f(x_{k}),f(x_{k+1}))\in \mtop_S$.
 Since $(S,\mtop_S)$ is a \fine locality semigroup,  we have
 $$(f(x_1)\cdots f(x_{k-1})f(x_k),f(x_{k+1}))\in \mtop_S$$
 and
 $$(f(x_1),f(x_2)\cdots f(x_{k})f(x_{k+1}))\in \mtop_S.$$
This completes the induction, and thus the proof.
\end{proof}
\begin{lem}
Let $(S,\mtop_S)$ be a \fine locality semigroup and let $m,n\geq 1$.  Let $(X,\mtop_X)$ be a locality set. Let $(x_i,x_{i+1})$, $(y_j,y_{j+1})\in \mtop_X$ for $i=1,\cdots,m-1$ and $j=1,\cdots,n-1$.  If $f:(X,\mtop_X)\to (S,\mtop_S)$  is a locality map and $(x_m,y_1)\in \mtop_X$, then
\begin{equation}
(f(x_1)\cdots f(x_m),f(y_1)\cdots f(y_n))\in \mtop_S.
\mlabel{eq:fmul}
\end{equation}
\mlabel{lem:fmul}
\end{lem}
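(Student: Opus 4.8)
The plan is to deduce Eq.~(\mref{eq:fmul}) from Lemma~\mref{lem:fs} by an induction on $n$, peeling off one factor $f(y_{n+1})$ at a time from the right and invoking the one-sided equivalence in Definition~\mref{defn:flocsg}(\mref{it:finetwo}). Throughout, all iterated products $f(x_1)\cdots f(x_m)$ and $f(y_1)\cdots f(y_n)$ are legitimate: their defining pairs lie in $\mtop_S$ by Lemma~\mref{lem:fs} and Eq.~(\mref{eq:refinedremark}), and by the locality associative law their values do not depend on the bracketing, so we may and do read them left-bracketed.

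\textbf{Base case $n=1$.} Here I would show $(f(x_1)\cdots f(x_m),f(y_1))\in\mtop_S$. Since $(x_i,x_{i+1})\in\mtop_X$ for $i=1,\dots,m-1$ and $(x_m,y_1)\in\mtop_X$, the string $x_1,\dots,x_m,y_1$ is a chain of consecutively $\mtop_X$-related elements of length $m+1$, and Eq.~(\mref{eq:fs1}) of Lemma~\mref{lem:fs} applied to it (with $x_{m+1}:=y_1$) gives exactly $(f(x_1)\cdots f(x_m),f(y_1))\in\mtop_S$; for $m=1$ this is just the locality of $f$.

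\textbf{Inductive step.} Assume the statement for some $n\ge 1$ and set $a:=f(x_1)\cdots f(x_m)$, $b:=f(y_1)\cdots f(y_n)$ and $c:=f(y_{n+1})$. The induction hypothesis gives $(a,b)\in\mtop_S$, and Eq.~(\mref{eq:fs1}) of Lemma~\mref{lem:fs} applied to the chain $y_1,\dots,y_{n+1}$ gives $(b,c)\in\mtop_S$. Because $(b,c)\in\mtop_S$, Definition~\mref{defn:flocsg}(\mref{it:finetwo}) converts $(a,b)\in\mtop_S$ into $(a,bc)\in\mtop_S$; and since $bc=(f(y_1)\cdots f(y_n))f(y_{n+1})=f(y_1)\cdots f(y_{n+1})$, this is Eq.~(\mref{eq:fmul}) with $n+1$ in place of $n$, completing the induction. (An essentially equivalent one-shot variant splits the $y$-chain after its first element: combine $(f(x_1)\cdots f(x_m),f(y_1))\in\mtop_S$ from Eq.~(\mref{eq:fs1}) with $(f(y_1),f(y_2)\cdots f(y_n))\in\mtop_S$ from Eq.~(\mref{eq:fs2}) through Definition~\mref{defn:flocsg}(\mref{it:finetwo}); it needs $n=1$ handled separately and the bracketing identity $f(y_1)(f(y_2)\cdots f(y_n))=f(y_1)\cdots f(y_n)$.)

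\textbf{Main obstacle.} There is no substantive obstacle: the only delicate point is the bookkeeping flagged in the first paragraph — verifying that every product written down is defined and that the two sides of the cancellation step denote the same element of $S$ — and this is settled once and for all by Lemma~\mref{lem:fs}, Eq.~(\mref{eq:refinedremark}) and locality associativity, so the remaining argument is a short induction.
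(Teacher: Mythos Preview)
Your proof is correct. Your primary argument proceeds by induction on $n$, peeling off $f(y_{n+1})$ and invoking Definition~\mref{defn:flocsg}(\mref{it:finetwo}), whereas the paper does it in one pass without an explicit induction: it treats the cases $m=1$ or $n=1$ directly via Lemma~\mref{lem:fs}, and for $m,n\geq 2$ it combines $(f(x_1)\cdots f(x_{m-1}),f(x_m))\in\mtop_S$ with $(f(x_m),f(y_1))\in\mtop_S$ via Eq.~(\mref{eq:refinedremark}) to obtain $(f(x_1)\cdots f(x_m),f(y_1))\in\mtop_S$, and then combines this with $(f(y_1),f(y_2)\cdots f(y_n))\in\mtop_S$ via Eq.~(\mref{eq:refinedremark}) again. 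This is precisely the ``one-shot variant'' you sketched parenthetically, so the two proofs are essentially the same; your inductive packaging is arguably cleaner since it avoids the separate case split on $m,n$, while the paper's version makes the two-step structure more transparent.
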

\begin{proof} We first verify that Eq.~(\mref{eq:fmul}) holds for special cases $m=1$ or $n=1$.  If $m=1$, then by hypothesis,
$$(x_1,y_1), (y_1,y_2),\cdots,(y_{n-1},y_n)\in \mtop_X.$$
 By Eq.~(\mref{eq:fs2}), $(f(x_1),f(y_1)\cdots f(y_n))\in \mtop_S$. Similarly, if $n=1$, then
$$(x_1,x_2), \cdots,(x_{m-1},x_m),(x_m,y_1)\in \mtop_X,$$
and so $(f(x_1)\cdots f(x_m),f(y_1))\in \mtop_S$ by Eq.~(\mref{eq:fs1}).

Next we consider $m,n\geq 2$.  By Lemma~\mref{lem:fs}, we obtain
\begin{equation}
(f(x_1)\cdots f(x_{m-1}), f(x_m))\in \mtop_S
\end{equation}
and
\begin{equation}
(f(y_1),f(y_2)\cdots f(y_n))\in \mtop_S.
\mlabel{eq:fbeta}
\end{equation}
Since $f$ is a locality map, we have $(f(x_m),f(y_1))\in \mtop_S$. By $(S,\mtop_S)$ being a \fine locality semigroup and Eq.~(\mref{eq:refinedremark}), we get
$$(f(x_1)\cdots f(x_{m-1})f(x_m),f(y_1))\in \mtop_S.$$
Thus, together with Eqs.~(\mref{eq:refinedremark}) and (\mref{eq:fbeta}),
$$(f(x_1)\cdots f(x_{m-1})f(x_m),f(y_1)f(y_2)\cdots f(y_n))\in \mtop_S.$$
\end{proof}
\begin{defn}
{\rm
Let $(X,\mtop_X)$ be a locality set. A {\bf free \fine locality semigroup} on $(X,\mtop_X)$ is a \fine locality semigroup $(F_L(X),\mtop_F)$ together with a locality  map $j_X:
(X,\mtop_X)\to (\FL(X),\mtop_F)$ such that, for any \fine locality semigroup $(S,\mtop_S)$ and
any locality map $f:(X,\mtop_X)\to (S, \mtop_S)$, there exists a unique
locality semigroup homomorphism $\free{f}: (\FL(X),\mtop_F)\to (S,\mtop_S)$
such that $f=\free{f}\circ j_X$, that is, the following diagram
$$\xymatrix{ (X,\mtop_X)  \ar[rr]^{j_X}\ar[drr]_{f} && (\FL(X),\mtop_F) \ar[d]^{\free{f}} \\
&& (S,\mtop_S)}
$$
commutes.}
\mlabel{defn:freelocsg}
\end{defn}

We next give a construction of free \fine locality semigroups on a locality set.
Let $Q$ be a quiver.
Denote
\begin{equation}
\mtop_\calq:=Q\times_\mtop Q:=\{(\alpha,\beta)\,|\,t(\alpha)=s(\beta),\alpha,\beta\in Q\}.
\end{equation}
Then $(Q,\mtop_\calq)$ is a locality set.
Define a set map
\begin{equation}
j_\calq: Q\to \calp,\quad\alpha\mapsto \alpha,\quad \alpha\in Q.
\end{equation}
Then $(j_\calq\times j_\calq)(\alpha,\beta)=(\alpha,\beta)\in \mtop_\calp$ for all $(\alpha,\beta)\in \mtop_\calq$, and so $j_\calq$ is a locality map. We next give the  main result of this section.
\begin{theorem}
With notations as above, the path locality semigroup $(\calp,\mtop_\calp, \mu_\calp)$ with  $j_\calq$ is a free \fine locality semigroup on a locality set $(Q,\mtop_\calq)$.
\mlabel{thm:FreeStrLocSg}
\end{theorem}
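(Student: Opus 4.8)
The plan is to verify that $(\calp,\mtop_\calp,\mu_\calp)$ equipped with the locality map $j_\calq$ satisfies the universal property of Definition~\mref{defn:freelocsg}. Fix an arbitrary \fine locality semigroup $(S,\mtop_S)$ and an arbitrary locality map $f\colon(Q,\mtop_\calq)\to(S,\mtop_S)$; we must produce a locality semigroup homomorphism $\free{f}\colon(\calp,\mtop_\calp)\to(S,\mtop_S)$ with $\free{f}\circ j_\calq=f$, and show that it is unique. Its value is forced: multiplicativity together with the uniqueness of the standard decomposition of a path into arrows compels $\free{f}(p):=f(\alpha_1)f(\alpha_2)\cdots f(\alpha_k)$ for a nonempty path $p=\alpha_1\alpha_2\cdots\alpha_k$, and $\free{f}(e_v):=f(e_v)$ on a trivial path. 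The substance of the proof is that this recipe is well defined and really is a locality semigroup homomorphism.

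The key preliminary step, and the one I expect to be the main obstacle, is a ``generalized locality associativity'' statement for \fine locality semigroups: if $a_1,\dots,a_n\in S$ satisfy $(a_i,a_{i+1})\in\mtop_S$ for $i=1,\dots,n-1$, then every bracketing of $a_1a_2\cdots a_n$ is admissible (each pair that gets multiplied lies in $\mtop_S$), and all bracketings evaluate to one and the same element of $S$, which may then be written unambiguously $a_1a_2\cdots a_n$. One proves this by induction on $n$, showing that an arbitrary bracketing equals the left-normed one: the cases $n\le3$ are exactly Eq.~(\mref{eq:refinedremark}) together with Definition~\mref{defn:flocsg}(\mref{it:fineass}), and in the inductive step one writes an arbitrary bracketing of $a_1\cdots a_n$ as a product of two shorter admissible bracketings, invokes the inductive hypothesis on each factor, and then re-associates with the help of Definition~\mref{defn:flocsg}(\mref{it:fineone})--(\mref{it:finetwo}) and (\mref{it:fineass}); it is essential here that axiom (\mref{it:fineass}) carries no hypothesis of the form $(a,c)\in\top$, which is exactly the feature that fails for general locality semigroups. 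Granting this, $\free{f}(p)$ is well defined: for $p=\alpha_1\cdots\alpha_k$ one has $(\alpha_i,\alpha_{i+1})\in\mtop_\calq$ by the definition of a path, hence $(f(\alpha_i),f(\alpha_{i+1}))\in\mtop_S$ since $f$ is a locality map, so the statement applies with $a_i=f(\alpha_i)$.

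Next I would check that $\free{f}$ is both a locality map and locality multiplicative. If $(p,q)\in\mtop_\calp$ then $t(p)=s(q)$; writing $p=\alpha_1\cdots\alpha_m$ and $q=\beta_1\cdots\beta_n$, this means $(\alpha_m,\beta_1)\in\mtop_\calq$, so Lemma~\mref{lem:fmul} yields $(\free{f}(p),\free{f}(q))=\big(f(\alpha_1)\cdots f(\alpha_m),\,f(\beta_1)\cdots f(\beta_n)\big)\in\mtop_S$; thus $\free{f}$ is a locality map. For multiplicativity, the path $pq$ has standard decomposition into arrows $\alpha_1\cdots\alpha_m\beta_1\cdots\beta_n$, so $\free{f}(pq)=f(\alpha_1)\cdots f(\alpha_m)f(\beta_1)\cdots f(\beta_n)$, and by the generalized associativity statement this equals $\big(f(\alpha_1)\cdots f(\alpha_m)\big)\big(f(\beta_1)\cdots f(\beta_n)\big)=\free{f}(p)\,\free{f}(q)$; the cases involving trivial paths are routine. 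Finally $\free{f}(j_\calq(\alpha))=\free{f}(\alpha)=f(\alpha)$ because $\alpha$ is a path of length one, so $\free{f}\circ j_\calq=f$.

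For uniqueness, let $g\colon(\calp,\mtop_\calp)\to(S,\mtop_S)$ be any locality semigroup homomorphism with $g\circ j_\calq=f$. Then $g$ agrees with $f$, hence with $\free{f}$, on every arrow; and for a path $p=\alpha_1\cdots\alpha_k$ a short induction on $k$ using locality multiplicativity of $g$ (each initial sub-product multiplies admissibly with the next arrow in $(\calp,\mtop_\calp)$, since $\calp$ is \fine by Proposition~\mref{prop:pathresg}) gives $g(p)=g(\alpha_1)\cdots g(\alpha_k)=f(\alpha_1)\cdots f(\alpha_k)=\free{f}(p)$, and likewise on trivial paths. Hence $g=\free{f}$, which completes the verification of the universal property; so $(\calp,\mtop_\calp,\mu_\calp)$ with $j_\calq$ is the free \fine locality semigroup on $(Q,\mtop_\calq)$. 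Beyond the generalized associativity lemma, every step is routine bookkeeping that Lemmas~\mref{lem:fs} and~\mref{lem:fmul} have already set up.
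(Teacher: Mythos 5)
Your overall route coincides with the paper's: the same formula $\free{f}(\alpha_1\cdots\alpha_k)=f(\alpha_1)\cdots f(\alpha_k)$, well-definedness via Lemma~\mref{lem:fs}, the locality-map property via Lemma~\mref{lem:fmul}, and the same uniqueness computation. The one genuine addition is your explicit ``generalized locality associativity'' lemma (every bracketing of a chain $a_1,\dots,a_n$ with $(a_i,a_{i+1})\in\mtop_S$ is admissible and all bracketings evaluate to the same element). The paper never states this: it fixes a bracketing in the definition of $\bar f$ and then writes $\bar{f}(p_1p_2)=f(\alpha_1)\cdots f(\alpha_m)f(\beta_1)\cdots f(\beta_n)=\bar{f}(p_1)\bar{f}(p_2)$, which silently identifies the product of the concatenated string with the product of the two sub-products. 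Your lemma is exactly what licenses that step (and what makes the unparenthesized products in Lemmas~\mref{lem:fs} and~\mref{lem:fmul} unambiguous), so making it explicit is an improvement in rigor rather than a detour.

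The one place where your write-up has a real gap is the sentence ``the cases involving trivial paths are routine.'' They are not. If the trivial paths $e_v$ lie in $\calp$ and the vertices lie in the generating locality set $Q$, then $(e_v,q)\in\mtop_\calp$ and $e_vq=q$ whenever $s(q)=v$, so locality multiplicativity of $\free{f}$ forces $f(e_v)\cdot\free{f}(q)=\free{f}(q)$ in $S$; nothing in the definition of a locality map makes $f(e_v)$ act as a left identity on the image, so this identity can fail and then no multiplicative extension of $f$ exists at all. If instead the vertices are excluded from $Q$, then $e_v$ is neither in the image of $j_\calq$ nor a product of arrows, and the uniqueness argument breaks down on trivial paths. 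The theorem is coherent only when $Q$ is read as the arrow set $Q_1$ and $\calp$ as the set of paths of length at least one; the paper's proof tacitly does this (every $p$ is written as $\alpha_1\cdots\alpha_k$ with the $\alpha_i$ arrows and trivial paths never appear), but neither the paper nor your proposal says so, and your explicit clause $\free{f}(e_v):=f(e_v)$ commits you to the version of the statement that is false. You should either restrict $\calp$ to nonempty paths or work in the monoid-like setting where $f$ is required to send $e_v$ to suitable local identities.
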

\begin{proof}
By Proposition~\mref{prop:pathresg}, the path locality semigroup $(\calp,\mtop_\calp, \mu_\calp)$ is a \fine locality semigroup. We next show that $(\calp,\mtop_\calp, \mu_\calp)$ satisfies the required universal property.
Let $(S,\mtop_S)$ be a \fine locality semigroup and let $f: (Q,\mtop_\calq) \to (S,\mtop_S)$ be a locality map. Define a set map
$$
\bar{f}: \calp \to S,\quad
p\mapsto  \bar{f}(p):=f(\alpha_1)f(\alpha_2)\cdots f(\alpha_k),
$$
where $ p=\alpha_1\cdots\alpha_k\in \calp$,  and $\alpha_i\in Q$ for $1\leq i\leq k.$  Since $(\alpha_i,\alpha_{i+1})\in\mtop_\calq$ for $i=1,\cdots,k-1$, we have $(f(\alpha_1),f(\alpha_2)\cdots f(\alpha_k))\in\mtop_S$ by Eq.~(\mref{eq:fs2}) in Lemma~\mref{lem:fs}, and thus $\bar{f}$ is well-defined. Firstly, we see that $f=\bar{f}\circ j_\calq$.
We then prove that $\bar{f}$ is indeed a  locality semigroup homomorphism, that is, $(\bar{f}\times\bar{f})(p_1,p_2)=(\bar{f}(p_1),\bar{f}(p_2))\in \mtop_S$  and $\bar{f}(p_1p_2)=\bar{f}(p_1)\bar{f}(p_2)$ for all $(p_1,p_2)\in \mtop_\calp$. Let $(p_1,p_2)\in \mtop_\calp$.
Suppose that $p_1=\alpha_1\cdots\alpha_m$ and $p_2=\beta_1\cdots \beta_n$ with $m,n\geq 1$, where $\alpha_i, \beta_j\in Q$ for $1\leq i\leq m $ and $1\leq j\leq n$. Then we obtain $(\alpha_i,\alpha_{i+1})\in \mtop_\calq$ and  $(\beta_j,\beta_{j+1})\in \mtop_\calq$ for $i=1,\cdots, m-1$ and $j=1,\cdots,n-1$. By $(p_1,p_2)\in \mtop_\calp$,
$$t(\alpha_m)=t(p_1)=s(p_2)=s(\beta_1),$$
and so $(\alpha_m,\beta_1)\in \mtop_\calp$.
Thus, by Lemma~\mref{lem:fmul}, we get
$$(f(\alpha_1)\cdots f(\alpha_{m-1})f(\alpha_m),f(\beta_1)f(\beta_2)\cdots f(\beta_n))\in \mtop_S.$$
This means that $(\bar{f}(p_1),\bar{f}(p_2))\in \mtop_S$. Now we prove that $\bar{f}(p_1p_2)=\bar{f}(p_1)\bar{f}(p_2)$.
By the definition of $\bar{f}$, we have
\begin{eqnarray*}
\bar{f}(p_1p_2)&=&\bar{f}(\alpha_1\cdots\alpha_m\beta_1\cdots\beta_n)\\
&=&f(\alpha_1)\cdots f(\alpha_m)f(\beta_1)\cdots f(\beta_n)\\
&=&\bar{f}(p_1)\bar{f}(p_2).
\end{eqnarray*}
To complete the proof, we finally verify the uniqueness of $\bar{f}$. Assume that there is another  locality semigroup homomorphism $\tilde{f}:(\calp,\mtop_\calp)\to (S,\mtop_S)$ such that $f=\tilde{f}\circ j_\calq$. For every $p=\alpha_1\cdots\alpha_k\in \calp$, we have
\begin{eqnarray*}
\tilde{f}(p)&=&\tilde{f}(\alpha_1\cdots \alpha_k)\\
&=&\tilde{f}(\alpha_1)\cdots\tilde{f}(\alpha_k)\\
&=&\tilde{f}(j_\calq(\alpha_1))\cdots\tilde{f}(j_\calq(\alpha_k))\\
&=&f(\alpha_1)\cdots f(\alpha_k)\\
&=&\bar{f}(p).
\end{eqnarray*}
Thus $\tilde{f}=\bar{f}$, as desired.
\end{proof}

\section{The relationships among several classes locality semigroups}
\mlabel{sec:parSG}
In this section, we  first introduce the \weak locality semigroups and partial semigroups.  We then investigate the relationships among \weak locality semigroups, partial semigroups and \fine locality semigroups.
\subsection{Strong locality semigroups}
\mlabel{subsec:strlsg}
We first introduce the concept of \weak locality semigroups.
\begin{defn}Let $(S,\top)$ be a locality set. A {\bf \weak locality semigroup} is a locality set $(S,\top)$ together with a partial binary operation defined on $S$:
$$\mu_S:S\times_\top S \to S,\quad (a,b)\mapsto ab\,\,\text{for all}\, (a,b)\in \top,$$
such that the {\bf \weak locality  associative law} holds: for all $a,b,c\in S$,
\begin{equation}
(a,b), (b,c)\in \top \Rightarrow (ab,c),(a,bc)\in \top\,\, \text{and}\,\,(ab)c=a(bc).
\mlabel{eq:wcomp}
\end{equation}
\mlabel{defn:wlocsg}
\end{defn}

\begin{lem}
Let $(\calp,\mtop_\calp,\mu_\calp)$ be the path locality semigroup  defined as in Proposition~\mref{prop:pathsg}. Then $(\calp,\mtop_\calp,\mu_\calp)$ is a \weak locality semigroup.
\mlabel{lem:PathWeak}
\end{lem}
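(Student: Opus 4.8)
The plan is to deduce the statement from Proposition~\ref{prop:pathresg} together with the remark following Definition~\ref{defn:flocsg}, since the implication ``refined $\Rightarrow$ strong'' holds quite generally. Recall that the defining condition of a strong locality semigroup, Eq.~\eqref{eq:wcomp}, requires that for all $a,b,c\in S$ with $(a,b),(b,c)\in\top$ one has both $(ab,c),(a,bc)\in\top$ \emph{and} $(ab)c=a(bc)$. Proposition~\ref{prop:pathresg} already establishes that $(\calp,\mtop_\calp,\mu_\calp)$ is a \fine locality semigroup, and Eq.~\eqref{eq:refinedremark} records precisely that in any \fine locality semigroup the hypotheses $(p_1,p_2),(p_2,p_3)\in\mtop_\calp$ force $(p_1p_2,p_3)\in\mtop_\calp$ and $(p_1,p_2p_3)\in\mtop_\calp$; meanwhile condition~\ref{it:fineass} of Definition~\ref{defn:flocsg} gives $(p_1p_2)p_3=p_1(p_2p_3)$. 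Hence every hypothesis of Definition~\ref{defn:wlocsg} is met, and $(\calp,\mtop_\calp,\mu_\calp)$ is a \weak locality semigroup.

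Alternatively, and to keep the argument self-contained, I would give the direct computation with the source and target functions, imitating what was done in Propositions~\ref{prop:pathsg} and~\ref{prop:pathresg}: given $(p_1,p_2),(p_2,p_3)\in\mtop_\calp$ we have $t(p_1)=s(p_2)$ and $t(p_2)=s(p_3)$, whence $t(p_1p_2)=t(p_2)=s(p_3)$ and $s(p_2p_3)=s(p_2)=t(p_1)$, giving $(p_1p_2,p_3)\in\mtop_\calp$ and $(p_1,p_2p_3)\in\mtop_\calp$; the equality $(p_1p_2)p_3=p_1p_2p_3=p_1(p_2p_3)$ is then just the associativity of concatenation of paths, so that all three paths in question coincide as the single path obtained by juxtaposing $p_1$, $p_2$, $p_3$.

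There is essentially no obstacle here: the lemma is a formal consequence of the \fine case already treated, and the only point deserving (minor) attention is the bookkeeping that \emph{both} membership assertions $(p_1p_2,p_3)\in\mtop_\calp$ and $(p_1,p_2p_3)\in\mtop_\calp$ be verified, so that both sides of the associativity identity are defined in the first place --- exactly the matter dispatched by Eq.~\eqref{eq:refinedremark}. I would phrase the written proof so as to make explicit the dependency on Proposition~\ref{prop:pathresg}, thereby also flagging the general fact that a \fine locality semigroup is always a \weak one, of which this lemma is then an instance.
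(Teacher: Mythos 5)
Your proposal is correct. Your second, self-contained argument (computing $t(p_1p_2)=t(p_2)=s(p_3)$ and $s(p_2p_3)=s(p_2)=t(p_1)$ to get both memberships in $\mtop_\calp$, then concluding associativity from concatenation) is word-for-word the paper's own proof of this lemma, so on that route there is nothing to add. Your primary route --- citing Proposition~\mref{prop:pathresg} together with Eq.~(\mref{eq:refinedremark}) and condition~(\mref{it:fineass}) of Definition~\mref{defn:flocsg} to conclude that every \fine locality semigroup is \weak --- is also valid and arguably cleaner: it isolates the general implication ${\bf RLSg}\subseteq{\bf SLSg}$, which the paper only states in passing in Section~\mref{sec:parSG} without a displayed proof, and makes the lemma a corollary of it. The paper instead repeats the source/target computation, which costs a few redundant lines but keeps the lemma independent of the \fine structure. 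Either way the content is the same, and you correctly flag the one point that needs care, namely that both $(p_1p_2,p_3)\in\mtop_\calp$ and $(p_1,p_2p_3)\in\mtop_\calp$ must be established before the associativity identity is even well posed.
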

\begin{proof}
Let $(p_1,p_2), (p_2,p_3)\in \mtop_\calp$. Then $t(p_1)=s(p_2)$ and $t(p_2)=s(p_3)$, and so $t(p_1p_2)=t(p_2)=s(p_3)$ and $t(p_1)=s(p_2)=s(p_2p_3)$. This means that $(p_1p_2,p_3) \in \mtop_\calp$ and $(p_1,p_2p_3)\in \mtop_\calp$, and thus
$(p_1p_2)p_3=p_1(p_2p_3).$
\end{proof}
Furthermore, we know that
\begin{prop}Every \weak locality semigroup is a locality semigroup.
\mlabel{prop:weak-loca}
\end{prop}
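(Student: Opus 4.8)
The plan is to verify, for $(S,\top,\mu_S)$ a \strong locality semigroup, the three defining conditions of a locality semigroup from Definition~\ref{defn:locsg}, namely the two polar-closure conditions~\eqref{eq:leftclosed} and~\eqref{eq:rightclosed} together with the locality associative law~\eqref{eq:locass}, using nothing more than the \strong locality associative law~\eqref{eq:wcomp}.

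First I would establish~\eqref{eq:leftclosed}. Fix a subset $U\subseteq S$ and take $(a,b)\in({}^\top U\times{}^\top U)\cap\top$; the goal is to show $ab\in{}^\top U$, i.e. $(ab,u)\in\top$ for every $u\in U$. Fix such a $u$. Since $b\in{}^\top U$ we have $(b,u)\in\top$, and $(a,b)\in\top$ by assumption, so the triple $(a,b,u)$ satisfies the hypothesis of~\eqref{eq:wcomp}; hence $(ab,u)\in\top$ (in fact one even gets $(ab)u=a(bu)$, though only the membership is needed here). As $u\in U$ was arbitrary, $ab\in{}^\top U$.

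The condition~\eqref{eq:rightclosed} is entirely symmetric: given $U\subseteq S$ and $(a,b)\in(U^\top\times U^\top)\cap\top$, fix $u\in U$; then $(u,a)\in\top$ because $a\in U^\top$, and $(a,b)\in\top$, so~\eqref{eq:wcomp} applied to $(u,a,b)$ yields $(u,ab)\in\top$, giving $ab\in U^\top$. Finally, the locality associative law~\eqref{eq:locass} is merely a weakening of~\eqref{eq:wcomp}: if $(a,b),(b,c),(a,c)\in\top$ then a fortiori $(a,b),(b,c)\in\top$, so~\eqref{eq:wcomp} already delivers $(ab)c=a(bc)$. Together these three points show $(S,\top,\mu_S)$ is a locality semigroup.

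I do not expect a genuine obstacle here; the content is purely a matter of unwinding definitions. The point worth highlighting is \emph{why} it goes through: the polar-closure axioms require exactly that a product $ab$ of two elements each composable with every $u\in U$ remain composable with every $u\in U$, and the \strong locality associative law furnishes precisely this composability ($(ab,u)\in\top$, respectively $(u,ab)\in\top$) assuming only $(a,b)\in\top$, with no further side conditions. So the implication reduces to reading~\eqref{eq:wcomp} in the two ways above.
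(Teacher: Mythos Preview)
Your proof is correct and is essentially identical to the paper's own argument: both verify~\eqref{eq:leftclosed} and~\eqref{eq:rightclosed} by fixing $u\in U$ and applying~\eqref{eq:wcomp} to the triples $(a,b,u)$ and $(u,a,b)$ respectively, and both observe that~\eqref{eq:locass} is an immediate weakening of~\eqref{eq:wcomp}.
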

\begin{proof}
Suppose that $(S,\mtop_S,\mu_S)$ is a \weak locality semigroup. We first prove Eqs.~(\mref{eq:leftclosed}) and (\mref{eq:rightclosed}) in Definition~\mref{defn:locsg}. Let $U\subseteq S$ and let $(a,b)\in (\lt\times\lt)\cap \mtop_S$. Then $(b,u)\in \mtop_S$ for all $u\in U$. Since $(a,b)\in \mtop_S$, we have $(ab,u)\in \mtop_S$ by Eq.~(\mref{eq:wcomp}). This gives $ab\in \lt$, and thus $\mu_S((\lt\times\lt)\cap \mtop_S)\subseteq \lt$. Let $(a,b)\in (U^\top\times U^\top)\cap \mtop_S$. Then $(u,a)\in \mtop_S$ for all $u\in U$. By $(a,b)\in \mtop_S$ and Eq.~(\mref{eq:wcomp}) again, we get $(u,ab)\in \mtop_S$, and so $\mu_S((U^\top\times U^\top)\cap \mtop_S)\subseteq U^\top$. The locality associative law follows from that for every $(a,b),(b,c)\in\mtop_S$, $(ab,c),(a,bc)\in\mtop_S$ and  $(ab)c=a(bc)$.
\end{proof}
But it is not true that every locality semigroup is  a \weak locality semigroup.
\begin{coex}
According to Example~\mref{exam:PosIn},  $(\N^+,\mtop_{cop})$ is a locality semigroup. But it is not a \weak locality semigroup. In fact, we note that $(2,3)\in \mtop_{cop}$ and $(3,4)\in \mtop_{cop}$, but $(6,4)\notin \mtop_{cop}$ and $(2,12)\notin \mtop_{cop}$. This means that $(\N^+,\mtop_{cop})$ does not satisfy Eq.~(\mref{eq:wcomp}).
\mlabel{coex:locsg-weaksg}
\end{coex}
\subsection{Partial semigroups}
In this section, we give the definition of partial semigroups~\mcite{Sch}.

\begin{defn}
\begin{enumerate}
\item
Let $(S,\top)$ be a locality set. A {\bf partial semigroup} is a locality set $(S,\top)$ together with a partial binary operation defined on $S$:
$$\mu_S:S\times_\top S \to S,\quad (a,b)\mapsto ab\,\,\text{for all}\,\, (a,b)\in \top,$$
such that the {\bf partial associative law} holds: for all $a,b,c\in S$, if $(a,b), (b,c)\in \top $, then
\begin{equation}
(ab,c)\in \top \Leftrightarrow (a,bc)\in \top
\mlabel{eq:equivalent}
\end{equation}
and in that case,
\begin{equation}
(ab)c=a(bc).
\mlabel{eq:parasslaw}
\end{equation}
\item
A partial semigroup  $(S,\top)$ is said to be {\bf transitive} if $\top$ is transitive, that is,
$$(a,b)\in \top \,\,\text{and}\,\, (b,c)\in \top \Rightarrow (a,c)\in \top.$$
\end{enumerate}
\mlabel{defn:parsg}
\end{defn}

Firstly, we see that every semigroup is a partial semigroup. Next we give another example of  patrial semigroups.
\begin{exam} Let $S:=\{0,1\}$ and let $\top:=\{(0,0),(0,1),(1,0)\}$. A {\bf partial addition} $+:S\times_\top S\to S$ is defined by
$(0,0)\mapsto 0,\quad(0,1)\mapsto 1,\quad (1,0)\mapsto 1.$  We shall prove that $(S,\top,+)$ is a partial semigroup. There are also five cases: $(0,0),(0,0)\in \top$; $(0,0),(0,1)\in \top$;  $(0,1),(1,0)\in \top$;  $(1,0),(0,0)\in \top$ and $(1,0),(0,1)\in \top$, having the property: $(a,b),(b,c)\in\top$ in Definition~\mref{defn:parsg}. We can check that Eq.~(\mref{eq:equivalent}) in Definition~\mref{defn:parsg} holds for the first four cases. The last case gives $(1+0,1)=(1,1)\notin \top$ and $(1,0+1)=(1,1)\notin\top$, and so Eq.~(\mref{eq:equivalent}) still holds. This means  that $(S,\top,+)$ is a partial semigroup. But it is not a \weak locality semigroup, since for $(1,0)\in \top$ and $(0,1)\in \top$, $(1+0,1)=(1,1)\notin \top$ and $(1,0+1)=(1,1)\notin\top$, that is, $(S,\top,+)$ does not satisfy Eq.~(\mref{eq:wcomp}) in Definition~\mref{defn:wlocsg}.
\mlabel{exam:psnsl}
\end{exam}
Let {\bf PSg} be the category of partial semigroups. Let {\bf LSg} be the category of locality semigroups, and let {\bf SLSg} be the category of \strong locality semigroups. Then by Example~\mref{exam:psnsl}, we have
\begin{equation}
{\bf SLSg} \subsetneq {\bf PSg}.
\mlabel{eq:slps}
\end{equation}
By Proposition~\mref{prop:weak-loca} and Counterexample~\mref{coex:locsg-weaksg},
we obtain
\begin{equation}
{\bf SLSg} \subsetneq {\bf LSg}.
\mlabel{eq:slls}
\end{equation}
Together with Eqs.~(\mref{eq:slps}) and (\mref{eq:slls}),
we get
$${\bf SLSg}\subseteq {\bf LSg}\cap {\bf PSg}.$$
Furthermore,  we show that ${\bf SLSg}$ is a proper subclass of ${\bf LSg}\cap {\bf PSg}$.
\begin{prop}With notations as above, we have
$${\bf SLSg}\subsetneq {\bf LSg}\cap {\bf PSg}.$$
\end{prop}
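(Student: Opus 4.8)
The plan is to exhibit a concrete locality semigroup that is simultaneously a partial semigroup but fails to be a strong locality semigroup, thereby witnessing the strict inclusion ${\bf SLSg}\subsetneq {\bf LSg}\cap {\bf PSg}$. The two inclusions ${\bf SLSg}\subseteq {\bf LSg}$ and ${\bf SLSg}\subseteq {\bf PSg}$ are already in hand (Proposition~\mref{prop:weak-loca} gives the first, and the chain of displayed equations before the statement records both), so only the properness of the combined inclusion remains. Since Counterexample~\mref{coex:locsg-weaksg} shows $(\N^+,\mtop_{cop})$ is a locality semigroup that is not strong, the natural first move is to check whether $(\N^+,\mtop_{cop})$ is also a partial semigroup: if so, it is the desired witness and the proof is essentially immediate.

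First I would verify the partial associative law for $(\N^+,\mtop_{cop})$ with ordinary multiplication. Take $a,b,c\in\N^+$ with $(a,b),(b,c)\in\mtop_{cop}$, i.e. $\gcd(a,b)=\gcd(b,c)=1$. We must show $(ab,c)\in\mtop_{cop}\Leftrightarrow(a,bc)\in\mtop_{cop}$, that is, $\gcd(ab,c)=1\Leftrightarrow\gcd(a,bc)=1$. Using $\gcd(b,c)=1$ one gets $\gcd(ab,c)=\gcd(a,c)$, and using $\gcd(a,b)=1$ one gets $\gcd(a,bc)=\gcd(a,c)$; hence both conditions are equivalent to $\gcd(a,c)=1$ and the equivalence \eqref{eq:equivalent} holds. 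When it holds, $(ab)c=a(bc)$ by ordinary associativity of multiplication in $\N^+$, so \eqref{eq:parasslaw} holds too. Therefore $(\N^+,\mtop_{cop})$ is a partial semigroup, hence lies in ${\bf LSg}\cap{\bf PSg}$ by Example~\mref{exam:PosIn} and the above.

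Next, recall from Counterexample~\mref{coex:locsg-weaksg} that $(\N^+,\mtop_{cop})$ is not a strong locality semigroup: $(2,3),(3,4)\in\mtop_{cop}$ but $(6,4)\notin\mtop_{cop}$ and $(2,12)\notin\mtop_{cop}$, violating \eqref{eq:wcomp}. Consequently $(\N^+,\mtop_{cop})\in({\bf LSg}\cap{\bf PSg})\setminus{\bf SLSg}$, which combined with the already-established inclusion ${\bf SLSg}\subseteq{\bf LSg}\cap{\bf PSg}$ gives the strict containment. I do not expect any real obstacle here; the only point requiring care is the gcd manipulation $\gcd(ab,c)=\gcd(a,c)$ when $\gcd(b,c)=1$, which is a standard elementary number-theory fact (if a prime $p$ divides $ab$ and $c$ then $p\nmid b$ forces $p\mid a$), so the argument is routine once that is invoked.
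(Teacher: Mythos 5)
Your proposal is correct and follows essentially the same route as the paper: both use $(\N^+,\mtop_{cop})$ as the witness, citing Example~\mref{exam:PosIn} for membership in ${\bf LSg}$ and Counterexample~\mref{coex:locsg-weaksg} for failure of strongness, and then verifying the partial associative law directly. The only (immaterial) difference is that you verify the equivalence $\gcd(ab,c)=1\Leftrightarrow\gcd(a,bc)=1$ via the gcd identity $\gcd(ab,c)=\gcd(a,c)$ when $\gcd(b,c)=1$, whereas the paper manipulates B\'ezout coefficients explicitly.
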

\begin{proof}
According to Example~\mref{exam:PosIn}, we see that $(\N^+,\mtop_{cop}) $ is a locality semigroup. But it is not a \weak locality semigroup by Counterexample~\mref{coex:locsg-weaksg}.
Next we prove that $(\N^+,\mtop_{cop}) $ is indeed a partial semigroup.
For $a,b,c\in\N^+$, we  let $(a,b)\in \mtop_{cop}$ and $(b,c)\in \mtop_{cop}$. We shall prove that $(ab,c)\in \mtop_{cop}$ if and only if $(a,bc)\in \mtop_{cop}$. Suppose first that $(ab,c)\in \mtop_{cop}$. Then there exist $s,t\in\Z$ such that
\begin{equation}abs+ct=1
\mlabel{eq:coprm1}
\end{equation}
By $(a,b)\in \mtop_{cop}$, there exist $m,n\in\Z$ such that $am+bn=1$, and then multiplying by $ct$, we obtain $amct+bcnt=ct$. By Eq.~(\mref{eq:coprm1}), we get $a(mct+bs)+bcnt=1$. Thus $(a,bc)\in \mtop_{cop}$. Similarly, if $(a,bc)\in \mtop_{cop}$, we can prove $(ab,c)\in\mtop_{cop}$ by using $(b,c)\in\mtop_{cop}$.
This means that $(\N^+,\mtop_{cop})$ satisfies Eq.~(\mref{eq:equivalent}), and so it is  a partial semigroup.  Thus
$(\N^+,\mtop_{cop})\in {\bf LSg}\cap {\bf PSg}$, and $(\N^+,\mtop_{cop})\notin  {\bf SLSg}$.
Thus,
\begin{equation}
{\bf SLSg}\subsetneq {\bf LSg}\cap {\bf PSg}.
\end{equation}
\end{proof}
We now explore the relationships between {\bf LSg} and {\bf PSg}.  For example,
\begin{exam}
Let $S:=\{0,1\}$ and let $\top:=\{(0,0),(0,1),(1,0)\}$.  A partial binary operation $\cdot:S\times_\top S\to S$ is given by
$(0,0)\mapsto 0,\quad(0,1)\mapsto 0,\quad (1,0)\mapsto 1.$ Consider the case: $(1,0),(0,1)\in\top$. Then $(1\cdot 0,1)=(1,1)\notin \top$. But $(1,0\cdot 1)=(1,0)\in\top$. This means that $(S,\top,\cdot)$ does not satisfy Eq.~(\mref{eq:equivalent}) in Definition~\mref{defn:parsg}. Thus $(S,\top,\cdot)$ is not a partial semigroup.

Next we prove that $(S,\top,\cdot)$ is a locality semigroup. Firstly, we verify that Eqs.~(\mref{eq:leftclosed}) and (\mref{eq:rightclosed}) hold for all subset $U$ of $S$.
Three cases arise: $U=\{0\}, U=\{1\}$ and $U=\{0,1\}$. In the first case,  $\lt=\{0,1\}=U^\top$, $(\lt\times \lt )\cap \top= \top$ and $0\cdot0=0\cdot 1=0\in\lt, 1\cdot 0=1\in \lt$.   In the second case,  $\lt=\{0\}=U^\top$, $(\lt\times \lt) \cap \top=\{(0,0)\}$ and $0\cdot 0=0\in\lt$. In the third case,  $\lt=\{0\}=U^\top$, and this is similar to the second case. Thus, Eqs.~(\mref{eq:leftclosed}) and (\mref{eq:rightclosed}) hold.

Secondly, there are four cases: $$(0,0),(0,0),(0,0)\in\top,\,(0,0),(0,1),(0,1)\in\top,$$
$$(0,1),(1,0),(0,0)\in\top, (1,0),(0,0),(1,0)\in \top, $$ satisfying the hypothesis: $(a,b),(b,c),(a,c)\in\top$ in Eq.~(\mref{eq:locass}) of Definition~\mref{defn:locsg}. Note that the case: $(1,0),(0,1)$, does not satisfy the hypothesis since $(1,1)\notin \top$.  We finally check that the locality associative law holds for all cases. By the equations $0\cdot 0=0$, $0\cdot 1=0$ and $1\cdot  0=1$, the first two cases give $(0\cdot 0)\cdot 0=0=0\cdot (0\cdot 0)$ and $(0\cdot 0)\cdot 1=0=0\cdot (1\cdot 0)$. In the third case,  $(0\cdot 1) \cdot 0=0=0\cdot (1\cdot  0)$. In the fourth case,  $(1\cdot 0)\cdot 0=1=1\cdot (0\cdot 0)$.
\mlabel{exam:lsg-psg}
\end{exam}
By Example~\mref{exam:lsg-psg}, we get
\begin{equation}
{\bf LSg} \nsubseteq {\bf PSg}.
\end{equation}
However, if a locality semigroup is transitive, then it is a partial semigroup.

\begin{prop}
Let $(S,\top)$ be a locality semigroup. If $(S,\top)$ is transitive, then $(S,\top)$ is a partial semigroup.
\end{prop}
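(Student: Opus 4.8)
The plan is to verify directly that a transitive locality semigroup $(S,\top)$ satisfies the partial associative law of Definition~\ref{defn:parsg}. So fix $a,b,c\in S$ and assume $(a,b)\in\top$ and $(b,c)\in\top$; I must show the equivalence $(ab,c)\in\top\Leftrightarrow(a,bc)\in\top$ and that in this case $(ab)c=a(bc)$.

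First I would observe that transitivity immediately gives $(a,c)\in\top$ from $(a,b),(b,c)\in\top$. Now suppose $(ab,c)\in\top$. Combined with $(a,b)\in\top$ and the (just-derived) fact $(a,c)\in\top$, I want to use the closure axioms of a locality semigroup to produce $(a,bc)\in\top$. Concretely, take $U:=\{a\}$; then $b\in U^\top$ since $(a,b)\in\top$, and $c\in U^\top$ since $(a,c)\in\top$, and $(b,c)\in\top$, so $(b,c)\in(U^\top\times U^\top)\cap\top$; applying Eq.~(\ref{eq:rightclosed}) yields $bc\in U^\top$, i.e. $(a,bc)\in\top$. Symmetrically, suppose $(a,bc)\in\top$; take $U:=\{c\}$, so that $a\in{}^\top\! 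U$ (from $(a,c)\in\top$) and $b\in{}^\top\! U$ (from $(b,c)\in\top$) and $(a,b)\in\top$, whence $(a,b)\in({}^\top\! U\times{}^\top\! U)\cap\top$, and Eq.~(\ref{eq:leftclosed}) gives $ab\in{}^\top\! U$, i.e. $(ab,c)\in\top$. This establishes the equivalence Eq.~(\ref{eq:equivalent}).

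Finally, once both $(ab,c)\in\top$ and $(a,bc)\in\top$ hold, together with $(a,b),(b,c),(a,c)\in\top$, the locality associative law Eq.~(\ref{eq:locass}) applies verbatim to give $(ab)c=a(bc)$, which is Eq.~(\ref{eq:parasslaw}). That completes the check that $(S,\top)$ is a partial semigroup, and transitivity of $(S,\top)$ as a partial semigroup is immediate since the relation $\top$ is the same.

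The only subtle point — and the one I would be careful with — is making sure the closure axioms (\ref{eq:leftclosed})–(\ref{eq:rightclosed}) are invoked with the right singleton set $U$ so that the hypotheses $(ab,c)\in\top$ or $(a,bc)\in\top$ are genuinely \emph{derived} rather than assumed; this is exactly the role played by transitivity (to get $(a,c)\in\top$) and by the Remark following Definition~\ref{defn:locsg}, which already packages precisely this singleton-$U$ argument. There is no real obstacle beyond bookkeeping: everything reduces to the two polar-set closure conditions plus transitivity, applied once in each direction.
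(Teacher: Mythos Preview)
Your proof is correct and follows essentially the same route as the paper: use transitivity to obtain $(a,c)\in\top$, then apply the polar-set closure axioms with the singletons $U=\{a\}$ and $U=\{c\}$ to deduce $(a,bc)\in\top$ and $(ab,c)\in\top$, after which locality associativity gives the equality. The only cosmetic difference is that you frame the argument as two separate implications (``suppose $(ab,c)\in\top$\ldots''), but as you yourself note at the end, neither hypothesis is actually used---both conclusions follow unconditionally from $(a,b),(b,c)\in\top$ plus transitivity, which is exactly how the paper states it.
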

\begin{proof}
For all $a,b,c\in S$, if $(a,b),(b,c)\in \top$, then $(a,c)\in \top$ by transitivity of $\top$, and so $a,b\in \lts\{c\}$ and $b,c\in\{a\}^\top$. By Eqs.~(\mref{eq:leftclosed}) and (\mref{eq:rightclosed}), $(ab,c)\in \top$ and $(a,bc)\in \top$. Thus Eq.~(\mref{eq:equivalent}) holds, and $(ab)c=a(bc)$ by the locality associative law.
\end{proof}
 We finally verify that {\bf LSg} also does not include {\bf PSg}, that is,
 \begin{equation}
{\bf PSg} \nsubseteq{\bf LSg}.
\end{equation}
In order to prove the above statement, we take an example as follows.
\begin{exam}
Let $S:=\{a,b\}$ be a set. Let $\top:=S\times_\top S:=\{(a,a),(b,b)\}\subseteq S\times S$. Define a partial binary operation $\cdot:S\times_\top S \to S$ by defining
$$(a,a)\mapsto a,  (b,b)\mapsto a.$$
We first verify that $(S,\top)$ is a  partial  semigroup, that is, for every $(x,y),(y,z)\in \top$, we have $(xy,z)$ if and only if $(x,yz)\in\top$, and in that case $(xy)z=x(yz)$. Consider all possible cases: $(a,a),(a,a)\in \top$ and $(b,b),(b,b)\in \top$, having the property that $(x,y),(y,z)\in \top$.  The first case gives $(aa,a)=(a,a)=(a,aa)$ and $(aa)a=a=a(aa)$. In the second case, $(bb,b)=(a,b)\notin\top $ and $(b,bb)=(b,a)\notin\top$, as desired.

We now prove that $(S,\top)$ is not a locality semigroup. Take $U=\{b\}$. Then $\lt=\{b\}$ and $(\lt\times\lt) \cap\top=\{(b,b)\}$. By the equation $bb=a$ and $a\notin \lt$, we know that $(S,\top)$ does not satisfy Eq.~(\mref{eq:leftclosed}) in Definition~\mref{defn:locsg}. This means that $(S,\top,\cdot) \in {\bf PSG}$, but $(S,\top,\cdot)\notin {\bf LSg}$.
\end{exam}
Let {\bf RLSg} be the category of \fine locality semigroups.  By the definition of \fine locality semigroups, every \fine locality semigroup is a \weak locality semigroup, but not vice versa. See Example~\mref{exam:str-fine} below.  Thus, ${\bf RLSg}\subsetneq {\bf SLSg}$.
Now let us put all the pieces together to give the main result of
this section.
\begin{theorem} With notations as above, we have
\begin{equation*}
{\bf RLSg}\subsetneq {\bf SLSg}\subsetneq {\bf LSg}\cap {\bf PSg}\quad\text{and}\quad{\bf LSg}\nsubseteq{\bf PSg}\nsubseteq {\bf LSg}.
\end{equation*}
\mlabel{thm:relation}
\end{theorem}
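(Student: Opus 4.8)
The plan is to obtain Theorem~\mref{thm:relation} by assembling the facts that are already in place, the only genuinely new ingredient being the properness of the inclusion ${\bf RLSg}\subsetneq{\bf SLSg}$; everything else is bookkeeping.

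First I would check ${\bf RLSg}\subseteq{\bf SLSg}$ straight from the definitions: if $(S,\top)$ is a \fine locality semigroup and $(a,b),(b,c)\in\top$, then $(ab,c),(a,bc)\in\top$ by Eq.~(\mref{eq:refinedremark}) (an immediate consequence of Definition~\mref{defn:flocsg}(\mref{it:fineone}) and (\mref{it:finetwo})) and $(ab)c=a(bc)$ by Definition~\mref{defn:flocsg}(\mref{it:fineass}), which is exactly the \strong locality associative law Eq.~(\mref{eq:wcomp}). For strictness I would exhibit a \strong locality semigroup that is not \fine; the simplest such object is $S=\{a,b\}$ with $\top:=\{(a,b)\}$ and $ab:=a$. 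Here Eq.~(\mref{eq:wcomp}) holds vacuously, since no triple $x,y,z\in S$ satisfies $(x,y),(y,z)\in\top$ (that would force $(b,z)=(a,b)$), so $(S,\top)$ is a \strong locality semigroup; on the other hand Definition~\mref{defn:flocsg}(\mref{it:fineone}) fails, because $(a,b)\in\top$ and $(ab,b)=(a,b)\in\top$ while $(b,b)\notin\top$, so it is not a \fine locality semigroup. This is the content of the example referenced immediately after the statement.

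Next I would quote the Proposition proved just before the theorem for the middle inclusion: ${\bf SLSg}\subseteq{\bf LSg}$ by Proposition~\mref{prop:weak-loca}, ${\bf SLSg}\subseteq{\bf PSg}$ by Eq.~(\mref{eq:slps}) (which rests on Example~\mref{exam:psnsl}), and $(\N^+,\mtop_{cop})$ witnesses strictness, being both a locality semigroup (Example~\mref{exam:PosIn}) and a partial semigroup (the coprimality computation in that proof) but not a \strong locality semigroup (Counterexample~\mref{coex:locsg-weaksg}); hence ${\bf SLSg}\subsetneq{\bf LSg}\cap{\bf PSg}$. Finally, for the two non-inclusions I would point to the two examples already exhibited: Example~\mref{exam:lsg-psg} is a locality semigroup that is not a partial semigroup, giving ${\bf LSg}\nsubseteq{\bf PSg}$, and the two-element object $S=\{a,b\}$, $\top=\{(a,a),(b,b)\}$, $aa=bb=a$ is a partial semigroup failing Eq.~(\mref{eq:leftclosed}) with $U=\{b\}$, giving ${\bf PSg}\nsubseteq{\bf LSg}$. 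Collecting these four assertions yields the displayed chain together with the two non-inclusions.

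There is no deep obstacle here, as the statement is a summary of the section. The only point demanding any care is the strict inclusion ${\bf RLSg}\subsetneq{\bf SLSg}$: one must make sure the chosen witness genuinely satisfies the \strong locality associative law — in the proposed example for the trivial reason that its hypothesis is never met — while genuinely violating one of the biconditionals in Definition~\mref{defn:flocsg}, so that it really lies in ${\bf SLSg}\setminus{\bf RLSg}$.
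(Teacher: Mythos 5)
Your proposal is correct and follows essentially the same route as the paper: the theorem is an assembly of Proposition~\mref{prop:weak-loca}, Counterexample~\mref{coex:locsg-weaksg}, Examples~\mref{exam:psnsl} and~\mref{exam:lsg-psg}, the $(\N^+,\mtop_{cop})$ computation, and the two-element partial semigroup with $\top=\{(a,a),(b,b)\}$, exactly as you cite them. The only divergence is your witness for ${\bf RLSg}\subsetneq{\bf SLSg}$ --- the paper uses Example~\mref{exam:str-fine} with $\top=\{(a,a),(a,b)\}$, whereas you use $\top=\{(a,b)\}$ so that the \strong associative law holds vacuously --- but your witness is checked correctly and serves the same purpose.
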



\section{Constructions of semigroups from \fine locality semigroups}
\mlabel{sec:stro}
In this section, we first describe how to obtain a partial semigroup from any given full semigroup.
Let $S:=(S,\mu_S)$ be a semigroup. We can get a partial semigroup from $S$ by the following way. Let $A$ be a nonempty set of $S$. Denote
$$\mtop_A:=A\times_\top A:=\{(a,b)\in A\times A\,|\,a b\in A\}.$$
Let $\mu_S|_{\mtop_A}$ be the restriction of $\mu_S$ to $\mtop_A$.  Then we  get a partial binary operation $\mu_S|_{\mtop_A}:A\times_\top A\to A,\,(a,b)\mapsto ab $. Thus,  $(A,\mtop_A,\mu_S|_{\mtop_A})$ is a partial semigroup, since $(ab,c)\in \mtop_A$ if and only if $(a,bc)\in\mtop_A$ for $(a,b),(b,c)\in\mtop_A$. In particular, if $A$ is a subsemigroup of $S$, then $\mtop_A=A\times A$. This means that every subsemigroup is a partial semigroup.

Now a natural question arises:

\vspace{4pt}
{\bf Question}\, How to construct a semigroup from any given partial semigroup?
\vspace{4pt}

By Theorem~\mref{thm:relation}, we see that every \fine locality semigroup is a special partial semigroup. Moreover,
we can construct a full semigroup from any  \fine locality semigroup as follows.
\begin{theorem}
Let $(S,\mtop_S,\cdot)$ be a \fine locality semigroup. Let $0\notin S$ and let $S^0:=S\cup \{0\}$.  Define a binary operation $\ast:S^0\times S^0\to S^0$ by defining
\begin{equation*}
(x,y)\mapsto x\ast y:=\left\{\begin{array}{lll}
x \cdot y, &(x,y)\in \mtop_S;\\
0,&(x,y)\notin \mtop_S.\\
\end{array}\right.
\end{equation*}
Then $(S^0,\ast)$ is a semigroup with zero.
\mlabel{thm:sgzero}
\end{theorem}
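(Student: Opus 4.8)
The plan is to check directly that $\ast$ is associative and that $0$ is a two-sided zero; totality and well-definedness of $\ast$ are built into its definition. First I would observe that since $0 \notin S$ and $\mtop_S \subseteq S \times S$, we have $(0,a) \notin \mtop_S$ and $(a,0) \notin \mtop_S$ for every $a \in S^0$, so $0 \ast a = a \ast 0 = 0$. This gives the zero element and also disposes of the associativity identity $(x \ast y)\ast z = x \ast (y \ast z)$ whenever at least one of $x,y,z$ equals $0$: both sides are then $0$.

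The real content is associativity for $x,y,z \in S$, and here I would split into cases according to which of $(x,y)$ and $(y,z)$ belong to $\mtop_S$. If both do, Eq.~(\mref{eq:refinedremark}) yields $(xy,z),(x,yz)\in\mtop_S$, so $(x\ast y)\ast z = (xy)z$ and $x\ast(y\ast z) = x(yz)$, and these coincide by the \fine locality associative law, Definition~\mref{defn:flocsg}(\mref{it:fineass}). If $(x,y)\in\mtop_S$ while $(y,z)\notin\mtop_S$, then $y\ast z = 0$ and hence $x\ast(y\ast z) = 0$; on the other side $x\ast y = xy \in S$, and condition~(\mref{it:fineone}) of Definition~\mref{defn:flocsg} forces $(xy,z)\notin\mtop_S$, whence $(x\ast y)\ast z = (xy)\ast z = 0$. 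The mirror case $(x,y)\notin\mtop_S$, $(y,z)\in\mtop_S$ is treated identically using condition~(\mref{it:finetwo}). Finally, if neither pair lies in $\mtop_S$, then $x\ast y = 0 = y\ast z$ and both sides equal $0$ at once. Associativity then follows in all cases, and $(S^0,\ast)$ is a semigroup with zero.

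The step I expect to matter most is the pair of mixed cases: there one must know that $(xy,z)\notin\mtop_S$ as soon as $(x,y)\in\mtop_S$ and $(y,z)\notin\mtop_S$ (and symmetrically), and this is exactly what the biconditionals in Definition~\mref{defn:flocsg}(\mref{it:fineone}) and (\mref{it:finetwo}) provide. For a merely \strong locality semigroup --- let alone a general partial semigroup --- one only controls $(xy,z)$ and $(x,yz)$ when \emph{both} of $(x,y),(y,z)$ lie in $\mtop_S$, so the construction would break down; accordingly I would flag the mixed cases as the heart of the argument, the remaining cases being a routine finite check.
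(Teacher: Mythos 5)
Your proof is correct and follows essentially the same route as the paper: the same four-way case split on whether $(x,y)$ and $(y,z)$ lie in $\mtop_S$, with the biconditionals of Definition~\ref{defn:flocsg}(\ref{it:fineone}) and (\ref{it:finetwo}) doing the work in the mixed cases, exactly as in the paper's Cases 1 and 2. Your explicit preliminary treatment of the triples involving $0$ is a small tidy addition but changes nothing of substance.
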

\begin{proof}
We shall prove  the associative law: for all $x,y,z\in S^0$, $(x\ast y)\ast z=x\ast (y\ast z)$.
Two cases arise.
\smallskip

\noindent
{\bf Case 1.} $(x,y)\in \mtop_S$.  Then there are two subcases depending on whether or not $(y,z)\in \mtop_S $.
If $(y,z)\in \mtop_S$, then $(x\cdot y,z),(x,y\cdot z)\in\mtop_S$ by  Eq.~(\mref{eq:refinedremark}), and thus $(x\cdot y)\cdot z=x\cdot (y\cdot z)$ by Definition~\mref{defn:flocsg}(\mref{it:fineass}). By the definition of $\ast$, we obtain
$$(x\ast y)\ast z=(x\cdot y)\cdot z=x\cdot(y\cdot z)=x\ast (y\ast z).$$
If $(y,z)\notin \mtop_S$, then $(x\cdot y,z)\notin \mtop_S$ by Definition~\mref{defn:flocsg}(\mref{it:fineone}), and so $(x\ast y)\ast z=0$. Hence,
$$(x\ast y)\ast z=0=x\ast(y\ast z).$$
\smallskip

\noindent
{\bf Case 2.} $(x,y)\notin \mtop_S$. Then $x\ast y=0$, and so $(x\ast y)\ast z=0$. In order to prove the associative law, we also consider two subcases: $(y,z)\in \mtop_S $  and $(y,z)\notin \mtop_S$.
If $(y,z)\in\mtop_S$, then $(x,y\cdot z)\notin \mtop_S$ by Definition~\mref{defn:flocsg}(\mref{it:finetwo}). This gives
$$x\ast (y\ast z)=0=(x\ast y)\ast z.$$
If $(y,z)\notin \mtop_S$, then $y\ast z=0$, and thus
$$x\ast (y\ast z)=0=(x\ast y)\ast z.$$
Then $(x\ast y)\ast z=x\ast (y\ast  z)$ and $x\ast0=0 \ast x=0$ for all $x,y,z\in S^0$. It follows that $(S^0,\ast)$ is a semigroup with zero.
\end{proof}

\begin{remark}
For any given  partial semigroup $(S,\cdot,\mtop_S)$, it is not necessary  to  construct a semigroup by adding elements, such as, zero element, to it as follows.
Firstly, let $0\notin S$ and let $S^0:=S\cup\{0\}$. Then we define a binary operation  $\mu_{S^0}:S^0\times S^0\to S^0$  by defining
$$(x,y)\mapsto\mu_{S^0}(x,y):=\left\{\begin{array}{lll}
x\cdot y, &(x,y)\in \mtop_S;\\
0,&(x,y)\notin \mtop_S.\\
\end{array}\right.$$
But  $(S^0,\mu_{S^0})$ may not be a semigroup.
\end{remark}
The next example shows that it really not necessary to extend any partial semigroup $(S,\cdot,\mtop_S)$ to a full semigroup using the above method even if $(S,\cdot,\mtop_S)$ is a strong locality semigroup.
\begin{exam}
Let $S:=\{a,b\}$ be a set. Let $\top:=S\times_\top S:=\{(a,a),(a,b)\}\subseteq S\times S$. Define a partial binary operation $\cdot:S\times_\top S \to S$ by defining
$$(a,a)\mapsto a, (a,b)\mapsto a.$$
We shall  show that $(S,\top)$ is a strong locality semigroup, that is, for every $(x,y),(y,z)\in \top$, we have $(xy,z),(x,yz)\in\top$, and $(xy)z=x(yz)$. Consider all possible cases: $(a,a),(a,a)\in \top$ and $(a,a),(a,b)\in \top$.  In the first case,  since $aa=a$, we have $(aa,a),(a,aa)\in\top$, and hence $(aa)a=a(aa)$.
 In the second case, we obtain $(aa,b)=(a,b)\in\top$, $(a,ab)=(a,a)\in\top$ and $(aa)b=a=a(ab)$. It follows that $(S,\top,\cdot)$ is a strong locality semigroup.

Secondly,  we verify that  $(S,\top,\cdot)$ is not a \fine locality semigroup.
Since $(a,b)\in\top$ and $ab=a$, we have $(ab,a)=(a,a)\in\top$. But $(b,a)\notin\top$, and hence $(S,\top,\cdot)$ does not satisfy Definition~\mref{defn:flocsg}(\mref{it:fineone}) (that is, if $(x,y)\in\top$, then $(y,z)\in \top$ if and only if $(xy,z)\in\top$). Thus,  $(S,\top,\cdot)$ is not a \fine locality semigroup.

Finally, let $0\notin S$ and let $S^0:=S\cup\{0\}$.
Define a binary operation  $\mu_{S^0}:S^0\times S^0\to S^0$  by defining
$$(x,y)\mapsto\mu_{S^0}(x,y):=\left\{\begin{array}{lll}
x\cdot y, &(x,y)\in \top;\\
0,&(x,y)\notin \top.\\
\end{array}\right.$$
Since $(ab)a=a\neq 0=a(ba)$,  $(S^0,\mu_{S^0})$ is not a semigroup.
\mlabel{exam:str-fine}
\end{exam}

According to Theorem~\mref{thm:sgzero}, we have
\begin{prop}Let $(S,\mtop_S,\cdot)$ be a \fine locality semigroup. Let $(S^0,\ast)$ be the semigroup with zero defined in Theorem~\mref{thm:sgzero}. Then
\begin{equation}
xyz\neq0\Leftrightarrow xy\neq 0\,\,\text{and}\,\, yz\neq 0\quad\text{for all}\,\, x,y,z\in S^0.
\end{equation}
\mlabel{prop:stgpro}
\end{prop}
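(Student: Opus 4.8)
The plan is to first dispose of the degenerate cases in which one of $x,y,z$ is the adjoined zero, and then to reduce the genuine content to a one-line application of the \fine locality axioms on $S$.

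\textbf{Reduction to $x,y,z\in S$.} If any one of $x,y,z$ equals $0$, then (since $0\ast s=s\ast 0=0$ for every $s\in S^0$) at least one of the two products $xy$, $yz$ is already $0$, so the right-hand side of the asserted equivalence fails; and because $0$ is a zero element of $(S^0,\ast)$ the triple product $xyz$ is also $0$, so the left-hand side fails too. Thus the equivalence is vacuously true in these cases, and it remains to treat $x,y,z\in S$. For such elements the decisive bookkeeping remark, read off directly from the definition of $\ast$ in Theorem~\mref{thm:sgzero} together with the facts that $\mu_S$ takes values in $S$ and $0\notin S$, is that $x\ast y\neq 0$ \emph{if and only if} $(x,y)\in\mtop_S$, and in that case $x\ast y=x\cdot y$ lies in $S$.

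\textbf{Forward implication.} I would prove $xyz\neq 0\Rightarrow(xy\neq 0$ and $yz\neq 0)$ by contraposition. If $xy=0$, then $xyz=(x\ast y)\ast z=0\ast z=0$. If $yz=0$, then, using that $(S^0,\ast)$ is a semigroup (Theorem~\mref{thm:sgzero}) so that $xyz=x\ast(y\ast z)$, we get $xyz=x\ast 0=0$. In either case $xyz=0$; this step uses only the definition of $\ast$ and the associativity already established in Theorem~\mref{thm:sgzero}.

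\textbf{Backward implication.} Suppose $xy\neq 0$ and $yz\neq 0$. By the remark above, $(x,y)\in\mtop_S$ and $(y,z)\in\mtop_S$. Since $(S,\mtop_S,\cdot)$ is a \fine locality semigroup, Eq.~(\mref{eq:refinedremark}) yields $(x\cdot y,z)\in\mtop_S$. Hence $xyz=(x\ast y)\ast z=(x\cdot y)\ast z=(x\cdot y)\cdot z\in S$, and therefore $xyz\neq 0$ because $0\notin S$. (Note only the implication $(a,b),(b,c)\in\mtop_S\Rightarrow(ab,c)\in\mtop_S$, i.e. the \strong part of Eq.~(\mref{eq:refinedremark}), is actually invoked here.)

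There is no genuine obstacle in this argument: the only thing demanding care is the equivalence, for $x,y\in S$, between ``$x\ast y=0$'' and ``$(x,y)\notin\mtop_S$'', and the observation that the refined axioms packaged in Eq.~(\mref{eq:refinedremark}) are precisely what guarantees the honest product $(x\cdot y)\cdot z$ is defined once both pairs $(x,y)$ and $(y,z)$ lie in $\mtop_S$. Everything else is the definition of $\ast$ and the associativity of $(S^0,\ast)$ from Theorem~\mref{thm:sgzero}.
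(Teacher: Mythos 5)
Your proof is correct and follows essentially the same route as the paper: the forward direction is the easy contrapositive (which the paper simply calls trivial), and the backward direction uses exactly the observation that $xy\neq 0$ and $yz\neq 0$ force $(x,y),(y,z)\in\mtop_S$, whence Eq.~(\mref{eq:refinedremark}) gives $(x\cdot y,z)\in\mtop_S$ and the triple product lands in $S$. Your extra bookkeeping on the degenerate cases and the equivalence ``$x\ast y\neq 0\Leftrightarrow(x,y)\in\mtop_S$'' is just a more explicit rendering of what the paper leaves implicit.
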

\begin{proof}$(\Rightarrow)$ is trivial. \\
$(\Leftarrow)$ By the hypothesis that $xy\neq 0$ and $yz\neq 0$, we get $(x,y),(y,z)\in\mtop_S$. Since $(S,\mtop_S,\cdot)$ is  a \fine locality semigroup, we have $(xy,z),(x,yz)\in\mtop_S$. This gives $(xy)z,x(yz)\in S$, and thus
 $$xyz=(xy)z=x(yz)\neq 0.$$
\end{proof}

\begin{defn}Let $S^0$ be a semigroup with zero. We say that $S^0$ is {\bf strong} if
\begin{equation}
abc\neq0\Leftrightarrow ab\neq 0\,\,\text{and}\,\, bc\neq 0\quad\text{for all}\,\, a,b,c\in S^0.
\mlabel{eq:xyz}
\end{equation}
\mlabel{defn:strsgzero}
\end{defn}
Note that Eq.~(\mref{eq:xyz}) is equivalent to
\begin{equation}
abc=0\Leftrightarrow ab= 0\,\,\text{or}\,\, bc= 0\quad\text{for all}\,\, a,b,c\in S^0.
\end{equation}
\begin{prop}Let $(\calp,\mtop_\calp,\mu_\calp)$ be the path locality semigroup defined as in Proposition~\mref{prop:pathsg}. Let $0\notin \calp$ and let $\calp^0:=\calp\cup \{0\}$. Define a binary operation $\ast_\calp:\calp^0\times \calp^0\to \calp^0$ by defining
\begin{equation*}
(p,q)\mapsto p\ast_\calp q:=\left\{\begin{array}{lll}
pq, &(p,q)\in \mtop_\calp;\\
0,&(p,q)\notin \mtop_\calp.\\
\end{array}\right.
\end{equation*}
Then $(\calp^0,\ast_\calp)$ is a strong semigroup with zero, called a {\bf path semigroup with zero}, or simply a {\bf path semigroup}.
\mlabel{prop:pathsgzero}
\end{prop}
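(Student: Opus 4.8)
The plan is to combine the two results already at hand: Proposition~\mref{prop:pathresg}, which says that $(\calp,\mtop_\calp,\mu_\calp)$ is a \fine locality semigroup, and Theorem~\mref{thm:sgzero}, which takes an arbitrary \fine locality semigroup $(S,\mtop_S,\cdot)$ and produces a semigroup with zero $(S^0,\ast)$ via exactly the formula used here for $\ast_\calp$. Applying Theorem~\mref{thm:sgzero} with $(S,\mtop_S,\cdot)=(\calp,\mtop_\calp,\mu_\calp)$ and noting that $\ast_\calp$ is literally the operation $\ast$ of that theorem, we conclude immediately that $(\calp^0,\ast_\calp)$ is a semigroup with zero. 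So the only thing left to prove is the \emph{strong} property of Definition~\mref{defn:strsgzero}, i.e. that for all $p,q,r\in\calp^0$,
\begin{equation*}
pqr\neq 0 \Leftrightarrow pq\neq 0 \ \text{and}\ qr\neq 0.
\end{equation*}

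First I would observe that this is exactly the conclusion of Proposition~\mref{prop:stgpro}, again applied to the \fine locality semigroup $(\calp,\mtop_\calp,\mu_\calp)$, whose associated semigroup with zero is precisely $(\calp^0,\ast_\calp)$. Thus the proof can be written in two lines: by Proposition~\mref{prop:pathresg} the path locality semigroup is \fine, hence Theorem~\mref{thm:sgzero} gives that $(\calp^0,\ast_\calp)$ is a semigroup with zero, and Proposition~\mref{prop:stgpro} shows it satisfies Eq.~(\mref{eq:xyz}), so it is a strong semigroup with zero. If one prefers a self-contained argument, the strong property can also be checked directly on paths: $p\ast_\calp q\neq 0$ means $(p,q)\in\mtop_\calp$, i.e. $t(p)=s(q)$; if additionally $q\ast_\calp r\neq 0$ then $t(q)=s(r)$, and since composition of paths is total once the source/target match up, $pqr=(pq)r$ is again a genuine path, hence nonzero. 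Conversely if $pqr\neq 0$ then $pqr$ is a path, which forces $t(p)=s(q)$ and $t(q)=s(r)$, i.e. $pq\neq 0$ and $qr\neq 0$.

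There is no real obstacle here; the statement is essentially a corollary of the machinery already developed, so the "hard part" is merely bookkeeping — making sure that the hypothesis $0\notin\calp$ is legitimate (paths never form a set that accidentally contains the symbol $0$, which one may always arrange) and that the operation $\ast_\calp$ in the proposition coincides with the operation $\ast$ produced by Theorem~\mref{thm:sgzero} on the underlying \fine locality semigroup $(\calp,\mtop_\calp,\mu_\calp)$. Both are immediate from the definitions. I would therefore present the proof as: "By Proposition~\mref{prop:pathresg}, $(\calp,\mtop_\calp,\mu_\calp)$ is a \fine locality semigroup. Hence by Theorem~\mref{thm:sgzero}, $(\calp^0,\ast_\calp)$ is a semigroup with zero, and by Proposition~\mref{prop:stgpro} it satisfies Eq.~(\mref{eq:xyz}). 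Therefore $(\calp^0,\ast_\calp)$ is a strong semigroup with zero." Optionally I would append the one-paragraph direct verification in terms of sources and targets of paths for the reader who wants to see it concretely.
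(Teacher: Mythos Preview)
Your proposal is correct and matches the paper's own proof almost verbatim: the paper likewise cites Proposition~\mref{prop:pathresg} to get that $(\calp,\mtop_\calp,\mu_\calp)$ is a \fine locality semigroup, and then invokes Theorem~\mref{thm:sgzero} together with Proposition~\mref{prop:stgpro} to conclude that $(\calp^0,\ast_\calp)$ is a strong semigroup with zero. Your optional direct source/target verification is extra but harmless.
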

\begin{proof}
By Proposition~\mref{prop:pathresg}, we know that the path locality semigroup $(\calp,\mtop_\calp,\mu_\calp)$ is a \fine locality semigroup. Then by Theorem~\mref{thm:sgzero} and Proposition~\mref{prop:stgpro}, $(\calp^0,\ast_\calp)$ is a strong semigroup with zero.
\end{proof}
\smallskip

\noindent
{\bf Acknowledgements.} This work was supported by the National Natural Science Foundation of
China (Grant No.~11601199)  and the Foundation of Jiangxi Provincial Education Department (Grant No. GJJ160336).

\end{document}